\theoremstyle{thmstyleone}
\newtheorem{theorem}{Theorem}[section]
\newtheorem{proposition}[theorem]{Proposition}
\newtheorem{lemma}[theorem]{Lemma}
\newtheorem{corollary}[theorem]{Corollary}
\newtheorem{notation}[theorem]{Notation}
\theoremstyle{thmstyletwo}
\newtheorem{remark}[theorem]{Remark}
\theoremstyle{thmstylethree}
\newtheorem{definition}[theorem]{Definition}
\newcommand{\Lip}{\operatorname{Lip}}
\newcommand{\dist}{\operatorname{dist}}
\newcommand{\om}{\Omega}
\newcommand{\scut}{\om^*}
\newcommand{\cubes}{\mathcal C}
\newcommand{\cube}{Q}
\newcommand{\alm}{m}
\newcommand{\A}{\mathcal{A}_\alm}
\newcommand{\B}{\mathcal{B}_\alm}
\renewcommand{\d}{\, \mathrm{d}}
\newcommand{\M}{\mathcal M}
\renewcommand{\P}{\mathcal P}
\newcommand{\Wass}{W}
\newcommand{\bWass}{Wb}
\newcommand{\bM}{\M b}
\newcommand{\ldir}{[\![}
\newcommand{\rdir}{]\!]}
\renewcommand{\L}{\phi}
\newcommand{\hLi}{\phi^*}
\newcommand{\lsum}{\mathcal T}
\newcommand{\N}{\mathbb N}
\newcommand{\R}{\mathbb R}
\begin{document}
	
	\title[Bi-Lipschitz embeddings and partial transportation]{Bi-Lipschitz embeddings of the space of unordered $\alm$-tuples with a partial transportation metric}

	\author*[1]{\fnm{David} \sur{Bate}}
	\author[2]{\fnm{Ana Luc\'ia} \sur{Garc\'ia Pulido}}

	\affil*[1]{Mathematics Institute, University of Warwick, Coventry, CV4 7AL.\\
	ORCiD: \href{https://orcid.org/0000-0003-0808-2453}{0000-0003-0808-2453}, \href{mailto:david.bate@warwick.ac.uk}{david.bate@warwick.ac.uk}}

	\affil[2]{
		Division of Computing Science and Mathematics,\\
		University of Stirling, Stirling, FK9 4LA.\\
	ORCiD: \href{https://orcid.org/0000-0002-4163-0768}{0000-0002-4163-0768}, \href{mailto:analucia.garciapulido@stir.ac.uk}{analucia.garciapulido@stir.ac.uk}}

	\abstract{
		Let $\om\subset \R^n$ be non-empty, open and proper.
		This paper is concerned with $\bWass_p(\om)$, the space of $p$-integrable Borel measures on $\om$ equipped with the \emph{partial} transportation metric introduced by Figalli and Gigli that allows the creation and destruction of mass on $\partial \om$.
		Alternatively, we show that $\bWass_p(\om)$ is isometric to a subset of Borel measures with the ordinary Wasserstein distance, on the one point completion of $\om$ equipped with the shortcut metric
	\[\delta(x,y)= \min\{\|x-y\|, \dist(x,\partial \om)+\dist(y,\partial\om)\}.\]

	In this article we construct bi-Lipschitz embeddings of the set of unordered $m$-tuples in $\bWass_p(\om)$ into Hilbert space.
	This generalises Almgren's bi-Lipschitz embedding theorem to the setting of optimal partial transport.
	}
		
	\maketitle

	\bmhead{Acknowledgements}
	D.B. was supported by the European Union's Horizon 2020 research and innovation programme grant number 948021. A.L.G.P. was supported by the Engineering and Physical Sciences Research Council grant number EP/R018472/1.

	We would like to thank Andrea Marchese for useful discussions regarding Almgren's $\alm$-valued functions.
	We would also like to thank the referee for carefully reading this article and providing valuable suggestions that improved the exposition of this work.
	
	\section{Introduction}

	A striking variety of problems in geometry, analysis, combinatorics and a vast number of applications can be neatly formulated in terms of measures and their comparison using transportation metrics.
	The prototypical transportation metric is the \emph{$p$-Wasserstein distance} \cite{abs}.
	This is defined between two Borel measures of the same total mass on a metric space $(X,d)$ by
	\begin{equation}
		\label{wass}
		\Wass_p(\mu,\nu) = \inf_{\gamma} \left(\int_{X\times X} d(x,y)^p \d\gamma(x,y)\right)^{\frac{1}{p}},
	\end{equation}
	where $p\geq 1$ and the infimum is taken over all measures $\gamma$ on $X\times X$ with coordinate projections $\pi_1\gamma=\mu$ and $\pi_2\gamma=\nu$.
	The resulting metric space of $p$-integrable probability measures equipped with $\Wass_p$ is denoted by $\Wass_p(X)$ (see \Cref{def-wass}).
	
	A drawback of the Wasserstein distance is the requirement that the compared measures must have the same total mass.
	Recently emerging theories of \emph{optimal partial transport} pertain to the transportation of measures without a mass constraint \cite{zbMATH06845438,zbMATH06856660,zbMATH06260964}.
	This article concerns the following formulation due to Figalli and Gigli \cite{zbMATH05771280}.
	
	Let $\om$ be an open non-empty proper subset of $X$.
	For \emph{measures} $\mu$ and $\nu$ on $\om$, one defines $\bWass_p(\mu,\nu)$ as in \eqref{wass}, but the infimum is taken over measures $\gamma$ on $\overline \om \times \overline \om$ with
	\[\pi_1 \gamma|_{\om}=\mu \quad \text{and} \quad \pi_2 \gamma|_{\om}=\nu.\]
	The resulting metric space of $p$-integrable measures, equipped with $\bWass_p$, will be denoted by  $\bWass_p(X)$ (see \Cref{FG-wass}).

	The key property of $\bWass_p$ is that $\partial\om$ can be used to destroy or create mass, at a cost of transporting it to or from $\partial \om$.
	This allows measures of different total masses to be compared and hence one can construct a metric space consisting of \emph{all} measures, instead of restricting to probability measures.
	Understanding the interplay between transportation metrics and $\partial \om$ is motivated by solving evolution equations with Dirichlet boundary conditions from gradient flows \cite{zbMATH05771280,zbMATH07216174}.
	The metric $\bWass_p$ has found further applications such as obtaining new comparison principles for viscosity solutions \cite{MR4024555}.
		
	A natural approach to study a metric space is to embed it into a well known space, such as a Euclidean or Banach space, as this allows the metric space to inherit geometric properties of the ambient space.
	Recall that the \emph{distortion} of an injective map $f$ between two metric spaces is $\Lip(f)\cdot\Lip(f^{-1})$, where $\Lip(f)$ is the Lipschitz constant of $f$;
	$f$ is \emph{bi-Lipschitz} if it has finite distortion.
	Since bi-Lipschitz embeddings preserve relative distances, they are central to analysis and metric geometry \cite{naorICM} and have applications to algorithm design \cite{indyk}.

	Due to the prominence of the Wasserstein spaces in various areas of mathematics, their embeddability has attracted much attention.
	The non-embeddability (into $L^1$) of $\Wass_1$ over various discrete metric spaces \cite{BGS} such as the planar grid \cite{NS} and Hamming cube \cite{KN} is known, as is the non-embeddability of $\Wass_p(\R^3)$ for $p\geq 1$ \cite{naor}.
	The interest in bi-Lipschitz embeddings of the Wasserstein spaces dates back to the work of Almgren \cite{zbMATH01528183,MR2663735},
	forming the foundations of his celebrated partial regularity theorem for area minimising currents.
	Almgren proved that, for any $\alm\in \N$, the set of unordered $\alm$-tuples of points in $\R^n$,
	\[\A(\R^n) = \left\{\sum_{i=1}^\alm \ldir x_i\rdir  : x_i\in \R^n\ \forall 1\leq i \leq \alm\right\}\]
	equipped with $\Wass_2$, bi-Lipschitz embeds into some Euclidean space (see \Cref{almgren}).
	Here and throughout, $\ldir x \rdir$ will denote the Dirac mass at $x$.

	In this article we generalise Almgren's embedding to $\bWass_2(\om)$.
	\begin{theorem}\label{main-intro}
		For $n\in\N$, let $\om\subset \R^n$ be non-empty, open and proper.
			The space $(\B(\om),\bWass_2)$ of unordered tuples of at most $\alm$ points bi-Lipschitz embeds into Hilbert space.
			The distortion of our embedding is at most $c\alm^{n+5/2}$, for some constant $c\geq 1$.
	\end{theorem}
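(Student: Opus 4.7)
The plan is to use the isometric identification stated in the abstract: $\bWass_2(\om)$ is isometric to a subspace of ordinary Wasserstein measures on the one-point completion $\scut=\om\cup\{\infty\}$ equipped with the shortcut metric $\delta(x,y)=\min\{\|x-y\|,\dist(x,\partial\om)+\dist(y,\partial\om)\}$. Under this identification, each tuple $\sum_{i=1}^k \ldir x_i\rdir\in\B(\om)$ with $k\le \alm$ corresponds to a tuple in $\A(\scut)$ of exactly $\alm$ points, with the missing $\alm-k$ points placed at the cemetery $\infty$. It therefore suffices to bi-Lipschitz embed $(\A(\scut),\Wass_2)$ into Hilbert space with distortion at most $c\alm^{n+5/2}$.

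To do this, I would fix a Whitney-type decomposition $\cubes$ of $\om$ in which each $\cube\in\cubes$ has diameter comparable to $\dist(\cube,\partial\om)$ and the cubes have bounded overlap. On a mild dilation $\neigh(\cube)$ of $\cube$, the shortcut metric $\delta$ coincides, up to a constant, with the Euclidean metric, because any shortcut through $\partial\om$ costs at least $\dist(\cube,\partial\om)\sim\operatorname{diam}(\cube)$. Almgren's theorem then provides a local bi-Lipschitz embedding $\L_\cube\colon\A(\neigh(\cube))\to\R^{N}$ with $N=N(\alm,n)$ and distortion polynomial in $\alm$. For a tuple $\xi\in\A(\scut)$, I would extract its ``local view'' in each $\cube$ via a fixed cutoff supported in $\neigh(\cube)$, apply $\L_\cube$, and assemble the contributions into $\hLi(\xi):=(\L_\cube(\xi))_{\cube\in\cubes}$. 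A coarse auxiliary coordinate $\lsum(\xi)$ records how much mass of $\xi$ lies in each region of $\om$ and at $\infty$, and is used to detect transport to $\partial\om$.

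The Lipschitz bound for $\hLi\oplus\lsum$ will follow from Almgren's local bound combined with the bounded overlap of the cubes, via a cube-by-cube squared-sum estimate. The main obstacle is the reverse direction: given $\xi,\eta\in\A(\scut)$, an optimal transport plan may split mass along three regimes --- within a single cube, between neighbouring cubes, or to and from $\partial\om$ (equivalently $\infty$) --- and the Hilbert-space distance between the images must dominate the partial transport cost in each case. The trickiest regime is when a point of $\xi$ lies near $\partial\om$ and is matched to $\infty$ in $\eta$: this contribution is cheap in $\delta$ yet must still be registered by some $\L_\cube$ or by $\lsum$. Handling all three regimes uniformly across all scales of cubes, while keeping track of the $\alm$-tuple permutation structure inherent in Almgren's construction, is the crux of the argument; the final factor $c\alm^{n+5/2}$ reflects the combined growth of the local Almgren distortion on each $\neigh(\cube)$ and the overhead introduced by the multi-scale gluing.
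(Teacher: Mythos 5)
Your high-level plan matches the paper's: reduce isometrically to $\A(\scut)$, take a Whitney decomposition $\cubes$ of $\om$ so that $\delta$ coincides with the Euclidean metric inside each cube, build cube-local embeddings, glue them by an $\ell_2$-sum, and invoke Almgren. But the proposal has a genuine gap precisely where you flag ``the crux of the argument'': the reverse Lipschitz bound, for which you offer no mechanism. (The Lipschitz direction is indeed routine from the bounded overlap of the Whitney cubes.)

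Two specific devices used by the paper are absent from your outline. First, the cube-local map $\L_\cube$ is not merely a Euclidean cutoff followed by Almgren: it maps into $\R^{n+1}$, with the extra last coordinate equal to $l(\cube)\eta(x)$ for the cutoff $\eta$. Because this coordinate is $l(\cube)$ near $\cube$ and $0$ away from $\cube$, it charges any transport out of $B(\cube,l(\cube)/8)$ --- in particular transport to $\partial$ --- by at least $\sim l(\cube)\sim\dist(\cdot,\partial\om)$. This subsumes your proposed auxiliary mass-tracking coordinate in a form compatible with the permutation structure; raw per-cube mass is not $\Wass_2$-Lipschitz, and it is unclear your auxiliary coordinate could be made to work without becoming this cutoff coordinate in disguise. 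Second, and more seriously, the reverse bound requires assembling the per-cube optimal matchings $\sigma_\cube$ into one global bijection $\tau$ with $\sum_k\delta(p_k,q_{\tau(k)})^2$ controlled by the embedded distance. This is where your ``$\alm$-tuple permutation bookkeeping'' has to happen, and it is nontrivial: the $\sigma_\cube$ need not agree on overlapping cubes. The paper resolves it with a moat argument: among $3\alm$ disjoint annuli of relative width $1/(3\alm)$ around each cube $\cube$, one is devoid of $p_i$'s and $q_{\sigma_\cube(i)}$'s; on the set $\cubes'$ of cubes whose local cost is smaller than the moat width, $\sigma_\cube$ is forced to pair points inside $\widehat\cube$ with points inside $\widehat\cube$ (\Cref{annbij}), and ordering those cubes by side length yields a consistent $\tau$ on the good set $E$ (\Cref{lowerbound-euclidean}); the points outside $E$ sit in cubes where the local cost already dominates $\dist(\cdot,\partial\om)$, which handles the boundary regime (\Cref{lowerbound-boundary}). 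The extra $\alm^{3/2}$ in the distortion beyond Almgren's $\alm^{n+1}$ comes exactly from the moat count and the points-per-cube count. Without an argument of this kind, the proposal does not establish the lower bound. A smaller inaccuracy: the isometric reduction in \Cref{B-embed-A} places $2\alm-k$, not $\alm-k$, copies of the cemetery point, because the embedding $\iota$ of \Cref{scut-equiv} needs total mass $2$ for the constructed coupling to have nonnegative coefficient at $(\partial,\partial)$.
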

	In general, $\bWass_p(\om)$ is not a doubling metric space and hence cannot be bi-Lipschitz embedded into any Euclidean space, see \Cref{non-doubling}.
	Therefore Hilbert space\footnote{We adopt the standard convention that Hilbert space is the unique complete and separable infinite dimensional inner product space, up to isometric isomorphism.} becomes the natural target for an embedding.
	Note that, since we are not constrained to comparing measures of the same total mass, in \Cref{main-intro} we consider unordered tuples of \emph{at most} $\alm$-points.

	To prove \Cref{main-intro}, we first show, for $\om\subset X$, that $\bWass_p(\om)$ isometrically embeds into the ordinary $p$-Wasserstein space of measures on $(\scut,\delta)$, where $\scut$ is the one point completion of $\om$ equipped with the shortcut metric 
	\[
		\delta(x,y)= \min\{\|x-y\|, \dist(x,\partial \om)+\dist(y,\partial \om)\}
	\] for every $x,y\in\om$ (see \Cref{scut-equiv}).
	This embedding maps $\B(\om)$ to $\A(\scut)$ and so, in order to prove \Cref{main-intro}, it remains to construct a bi-Lipschitz embedding of $\A(\scut)$ into Hilbert space. 

	We do this, for $\om\subset \R^n$, by considering a Whitney decomposition $\cubes$ of $\om$ into cubes.
	This decomposition is chosen such that, inside any cube $\cube\in\cubes$, the shortcut metric equals the Euclidean metric and consequently
	\begin{equation}
		\label{intro-Q-equal}
		\A(\cube,\delta) = \A(\cube,\|\ \|).
	\end{equation}
	In particular, Almgren's theorem gives an embedding of each $\A(\cube)$ into some Euclidean space.
	Despite the fact that any measure can be written as a sum of measures supported on cubes in $\cubes$, the construction of the required bi-Lipschitz embedding of $\A(\scut)$ cannot be obtained simply by restricting to cubes.
	Indeed, $\Wass_p$ may not even be defined between the restriction of two measures to a cube;
	even when it is, simple examples show that the optimal transport of the restricted measures may be incomparable to the optimal transport of the original measures.

	Our approach uses \eqref{intro-Q-equal} as the starting point to determine the optimal transport of measures between different cubes, see \Cref{coord-proj}.
	From this analysis we construct a bi-Lipschitz embedding of $\A(\scut)$ into the $\ell_2$-sum of infinitely many copies of $\A(\R^{n+1})$, see \Cref{embed-into-wass}.
	The proof of \Cref{main-intro} is concluded in \Cref{sec4} by applying Almgren's embedding to each term of the $\ell_2$-sum.

	We mention an application of \Cref{main-intro} to persistence homology.
	The space of \emph{persistence barcodes} can be viewed as $\cup_m\B(U)$ for
		\[
		U	= \{ (x,y)\in\R^2: y> x\},
		\] 
		see \cite{zbMATH07421454}. 
		\Cref{main-intro} shows that the space of persistence barcodes with at most $\alm$-points can be bi-Lipschitz embedded into Hilbert space.
		This answers questions raised by 
		Carri{\`e}re and Bauer \cite{bauer}.
		Prior to our results, it was known that $\B(U)$ coarsely embeds into Hilbert space \cite{mitravirk}. 
		In fact, \Cref{main-intro} applies to the generalised persistence barcodes introduced in \cite{2109.14697} whenever the ambient space is Euclidean.
		Our theorem also holds when $\B(\om)$ is equipped with any $\Wass_p$ for $p\geq 1$; due to the equivalence of norms on $\R^\alm$, these metrics are all bi-Lipschitz equivalent.

	Finally, we mention that the distortion of any embedding of $\B(\om)$ into Hilbert space, for $\om\subset \R^n$, must necessarily converge to $\infty$ as $\alm$ does, see \Cref{distortion}. 

	\section{Wasserstein distance and Almgren's embedding}\label{sec-wass}

	Let $(X,d)$ be a complete and separable metric space.
	We write $\M(X)$ for the set of Borel measures on $X$ and $\P(X)$ for the set of Borel probability measures on $X$.
	The Wasserstein space is defined as follows \cite{abs,ags}.
	\begin{definition}
		\label{def-wass}
		For $\mu,\nu\in \M(X)$ and $p \in [1,\infty)$ define
		\[\Wass_p(\mu,\nu) = \inf_{\gamma} \left(\int_{X\times X} d(x,y)^p \d\gamma(x,y)\right)^{\frac{1}{p}},\]
		where the infimum is taken over all \emph{couplings} $\gamma \in \M(X\times X)$ with coordinate projections $\pi_1\gamma=\mu$ and $\pi_2\gamma=\nu$.
		Note that $\Wass_p(\mu,\nu)<\infty$ only if $\mu(X)=\nu(X)$ as otherwise there does not exist a $\gamma$ as in \Cref{def-wass}.
	
		Let $\P_p(X)$ be those $\mu\in \P(X)$ with
		\[\int_{X} d(x,x_0)^p \d\mu(x) <\infty\]
		for some (equivalently all) $x_0\in X$.
		Then $\Wass_p$ defines a metric on $\P_p(X)$.
		Analogous statements hold for the case $p=\infty$, where the $L^p$ integral is replaced by an essential supremum.
		We write $\Wass_p(X)$ for the set $\P_p(X)$ equipped with $\Wass_p$.
	\end{definition}

	\begin{definition}\label{tuples}
		For $\alm \in \N$, define the space of \emph{unordered $m$-tupes}
		\[\A(X) = \left\{\sum_{i=1}^\alm \ldir x_i\rdir  : x_i\in X\ \forall 1\leq i \leq \alm\right\},\]
		equipped with $\Wass_2$.
		Note that, on $\A(X)$, $\Wass_2$ equals
		\begin{equation*}
			\Wass_2(p,q) = \min_{\sigma\in \Sigma_\alm} \sqrt{\sum_{i=1}^\alm d(p_i,q_{\sigma(i)})^2},
		\end{equation*}
		where $p=\sum_{i=1}^\alm \ldir p_i\rdir $ and $q=\sum_{i=1}^\alm \ldir q_i\rdir $.
	\end{definition}

	A fundamental step in Almgren's study of area minimising currents was the following bi-Lipschitz embedding.
	\begin{theorem}[Almgren, Theorem 2.1 \cite{MR2663735}]
		\label{almgren}
		For every $\alm\in\N$ there exists an $N\in\N$ and a bi-Lipschitz embedding $\xi\colon \A(\R^n) \to \R^N$.
		By inspecting the proof one sees that $\xi(0)=0$ and, for all $p,q \in \A(\R^n)$,
		\begin{equation*}
			\frac{\Wass_2(p,q)}{c\alm^{n+1}} \leq \|\xi(p)-\xi(q)\| \leq \Wass_2(p,q) 
		\end{equation*}
		for a constant $c\geq 1$.
	\end{theorem}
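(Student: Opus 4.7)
The plan is to exhibit $\xi$ as an orthogonal direct sum of sorting maps along linear projections. Fix a finite collection $V \subset \S$ of unit vectors (to be chosen below), and for each $v \in V$ define $\xi_v \colon \A(\R^n) \to \R^\alm$ by sending $p = \sum_{i=1}^\alm \ldir x_i \rdir$ to the vector whose entries are the scalars $\langle v, x_i\rangle$ sorted in non-decreasing order. Since sorting is independent of the labelling of the $x_i$, this is well-defined on $\A(\R^n)$ and sends $0$ to $0$. The candidate embedding into $\R^{\alm |V|}$ is then $\xi := \bigoplus_{v \in V} c_v\, \xi_v$, for positive weights $c_v$ with $\sum_{v\in V} c_v^2 = 1$.

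The upper bound is straightforward. Given $p, q \in \A(\R^n)$ and a $\Wass_2$-optimal matching $\sigma^*$, the one-dimensional rearrangement inequality gives
\[
\|\xi_v(p) - \xi_v(q)\|^2 \;\leq\; \sum_{i=1}^\alm \bigl|\langle v, p_i - q_{\sigma^*(i)}\rangle\bigr|^2 \;\leq\; \sum_{i=1}^\alm \|p_i - q_{\sigma^*(i)}\|^2 \;=\; \Wass_2(p,q)^2,
\]
using $\|v\|=1$ at the second step. The normalisation $\sum_v c_v^2 = 1$ then makes $\xi$ itself $1$-Lipschitz.

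The substantive content is the lower bound, for which $V$ must be chosen carefully. The key geometric observation is that as $v$ varies over $\S$, the sort-matching implicit in $\xi_v(p)-\xi_v(q)$ is locally constant, changing only across the finite hyperplane arrangement $\{v \in \S : \langle v, x_i - y_j\rangle = 0\}$ formed from the points $x_i, y_j$ in the supports of $p$ and $q$; this arrangement has only polynomially many (in $\alm$) cells. A singular-value-type argument applied to the displacement matrix with rows $p_i - q_{\sigma^*(i)}$ produces a direction $v_* \in \S$ for which $\sum_i \langle v_*, p_i - q_{\sigma^*(i)}\rangle^2 \geq \Wass_2(p,q)^2/n$; a perturbation argument guarantees that one may additionally arrange for $v_*$ to lie in a cell on which the sort-matching coincides with the optimal $\sigma^*$. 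Choosing $V$ to be a $\delta$-net in $\S$ with $\delta$ polynomial in $\alm^{-1}$ ensures that some $v \in V$ lies in that cell and yields $\|\xi_v(p)-\xi_v(q)\|^2 \gtrsim \alm^{-2(n+1)}\Wass_2(p,q)^2$, furnishing the claimed distortion after the overall normalisation.

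The main obstacle is the simultaneous requirement that the chosen $v \in V$ lie in a cell where the sort-matching equals an optimal matching \emph{and} that it align well enough with a dominant singular direction of the displacement matrix to retain a definite fraction of $\Wass_2^2$. These two constraints trade off in the required density of the net $V$ against the complexity of the arrangement, and balancing them is what produces the exponent $n+1$ rather than a larger exponent. The target dimension $N = \alm|V|$ then depends only on $\alm$ and $n$.
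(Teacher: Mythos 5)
Your construction—projecting to sorted scalar products along a fixed net of directions $V\subset\S$ and taking the $\ell_2$-sum—is precisely the Almgren/De~Lellis--Spadaro construction, and the paper does not reprove this theorem but cites it from \cite{MR2663735}. The upper Lipschitz bound via the one-dimensional rearrangement inequality and Cauchy--Schwarz is correct. A small slip in the set-up: the sort-matching changes across the hyperplanes $\langle v,p_i-p_j\rangle=0$ and $\langle v,q_i-q_j\rangle=0$ (differences \emph{within} each tuple), not $\langle v,x_i-y_j\rangle=0$.

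The lower bound sketch has a genuine gap, concentrated in exactly the step you flag as the ``main obstacle.'' The assertion that one may perturb a leading singular direction $v_*$ of the displacement matrix into the arrangement cell on which the sort-matching equals the $\Wass_2$-optimal $\sigma^*$ is not justified, and it does not appear to be true in general: the cell for $\sigma^*$ need not be near the top singular direction, need not be wide (for nearly degenerate configurations it can be arbitrarily thin, so no net with density depending only on $\alm$ is guaranteed to hit it), and it is not even clear that $\sigma^*$ is realised as a sort-matching along \emph{any} direction. The argument in De~Lellis--Spadaro avoids this entirely: by a careful induction on $\alm$ one shows that for some grid direction $e_l$ the \emph{sort-matching} $\tau$---whichever permutation it happens to be---satisfies $\sum_i\|p_i-q_{\tau(i)}\|^2\leq C(\alm,n)\sum_i|\langle e_l,p_i-q_{\tau(i)}\rangle|^2$. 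Since $\tau$ is admissible, $\Wass_2(p,q)^2\leq\sum_i\|p_i-q_{\tau(i)}\|^2$, and the lower bound follows without any reference to $\sigma^*$. Your singular-value observation produces a direction that is good \emph{for the matching $\sigma^*$}, but $\|\xi_v(p)-\xi_v(q)\|^2$ is a minimum over all matchings, so aligning with $\sigma^*$ does not control the sort-matching cost unless you are already inside the right cell; this is exactly the circularity that the inductive lemma is designed to break. The claimed exponent $\alm^{n+1}$ also relies on the quantitative bookkeeping of that induction and of the net density, which the sketch does not recover.
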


	\section{Optimal Partial Transport and the Shortcut Metric}
	\label{prelim}
	
	The transportation metric $\bWass$ introduced by Figalli and Gigli \cite{zbMATH05771280} is defined between two Borel measures.
	Originally defined for open and bounded $\om\subset \R^n$, we state the natural generalisation of $\bWass$ to complete and separable metric spaces $(X,d)$ (the proof of the triangle inequality is identical).

	\begin{definition}
		\label{FG-wass}
		Let $\om\subset X$ be proper and non-empty.
		For $\mu,\nu\in \M(\om)$ and $p \in [1,\infty)$ define
		\[\bWass_p(\mu,\nu) = \inf_{\gamma} \left(\int_{X\times X} d(x,y)^p \d\gamma(x,y)\right)^{\frac{1}{p}},\]
		where the infimum is taken over all \emph{couplings} $\gamma \in \M(X\times X)$ with $\pi_1\gamma|_\om=\mu$ and $\pi_2\gamma|_\om=\nu$.
		Then $\bWass_p$ defines a metric on
			\[\bM_p(\om):=\{\mu\in \M(\om) :\bWass_p(\mu,0)<\infty\}.\]
		Analogous statements hold for the case $p=\infty$, where the $L^p$ integral is replaced by an essential supremum.

		We write $\bWass_p(\om)$ for the set $\bM_p(\om)$ equipped with $\bWass_p$.
		We also write $\bWass^1_p(\om)$ for the set of $\mu\in \bM_p(\om)$ with $\mu(\om)\leq 1$, equipped with $\bWass_p$.
	\end{definition}

	The first step in our proof of \Cref{main-intro} is to show an equivalence between $\bWass^1_p(\om)$ and $\Wass_p(\scut)$, for $\scut$ the \emph{shortcut} metric space, defined as the one point completion of $\om$ via its complement.
	\begin{definition}
		\label{scut}
		For $\om\subset X$ non-empty and proper, let $\scut = \om \cup \{\partial\}$.
		For $x,y\in\om$ define
		\begin{equation*}
			\delta(x,y)= \min\{\|x-y\|, \dist(x,X\setminus \om)+\dist(y,X\setminus \om)\}
		\end{equation*}
		and $\delta(x,\partial) = \dist(x,X\setminus \om)$.
		Then $\delta$ defines a metric on $\scut$.
	\end{definition}

	Profeta and Sturm \cite[Remark 1.9]{zbMATH07216174} mention that $\bWass_1^1(\om)$ isometrically embeds into $\Wass_1(\scut)$, and give an example showing that their embedding is not an isometry for $p>1$.
	We show that there exists an isometric embedding of $\bWass_p^1(\om)$ into $2\Wass_p(\scut)$ for any $p\geq 1$.
	Here we write $2\Wass_p(\scut)$ for the space of measures with total mass equal to 2.
	\begin{lemma}
		\label{scut-equiv}
		Let $X$ be a separable metric space and $\om\subset X$ be non-empty and proper.
		For any $p\geq 1$, the map
		\begin{align*}
			\bWass_p^1(\om) &\to 2\Wass_p(\scut)\\
			\iota(\mu) &= \mu + (2-\mu(\om))\ldir \partial \rdir,
		\end{align*}
		is an isometric embedding.
	\end{lemma}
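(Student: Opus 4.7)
The plan is to establish the two inequalities $\Wass_p(\iota(\mu),\iota(\nu)) \leq \bWass_p(\mu,\nu)$ and $\bWass_p(\mu,\nu) \leq \Wass_p(\iota(\mu),\iota(\nu))$ by directly transferring near-optimal couplings between the two spaces. Throughout, the boundary point $\partial$ of $\scut$ will play the role of the entire complement $X\setminus\om$ in the Figalli--Gigli formulation, and I will view the atom added at $\partial$ in the definition of $\iota$ as a mass reservoir parameterising how much of each measure interacts with the boundary.

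For the inequality $\Wass_p(\iota(\mu),\iota(\nu))\leq \bWass_p(\mu,\nu)$, I start with a $\bWass_p$-coupling $\gamma\in\M(X\times X)$ of $\mu$ and $\nu$ and assume, without loss of generality, that $\gamma((X\setminus\om)\times(X\setminus\om))=0$, since that portion contributes nonnegative cost without affecting the marginal constraint on $\om$. I then define a measure $\tilde\gamma$ on $\scut\times\scut$ by keeping $\gamma|_{\om\times\om}$, collapsing the mass on $(X\setminus\om)\times\om$ and $\om\times(X\setminus\om)$ to $\{\partial\}\times\om$ and $\om\times\{\partial\}$ respectively, and adding an atom of mass $c:=2-\mu(\om)-\gamma((X\setminus\om)\times\om)$ at $(\partial,\partial)$. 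A short bookkeeping check confirms that the marginals of $\tilde\gamma$ agree with $\iota(\mu)$ and $\iota(\nu)$, and consistency of the value of $c$ between both marginals is automatic; nonnegativity of $c$ follows from the chain $\gamma((X\setminus\om)\times\om)\leq\nu(\om)\leq 1\leq 2-\mu(\om)$, which is precisely where the restriction to $\bWass^1_p(\om)$ is essential. Since $\delta(x,y)\leq d(x,y)$ on $\om\times\om$ and $\delta(\partial,y)=\dist(y,X\setminus\om)\leq d(x,y)$ whenever $x\in X\setminus\om$, the $\delta^p$-cost of $\tilde\gamma$ is bounded by the $d^p$-cost of $\gamma$.

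For the reverse inequality, I begin with a coupling $\tilde\gamma\in\M(\scut\times\scut)$ of $\iota(\mu)$ and $\iota(\nu)$ and transfer it to a coupling $\gamma$ on $X\times X$. The mass on $\{\partial\}\times\om$ is sent to $(x'(y),y)$ where $x'(y)\in X\setminus\om$ is an approximately nearest point to $y$; the mass on $\om\times\{\partial\}$ is treated symmetrically, and mass at $(\partial,\partial)$ is simply discarded as it has $\delta$-cost zero. The interior mass $\tilde\gamma|_{\om\times\om}$ is decomposed according to whether the minimum defining $\delta(x,y)$ is realised by $d(x,y)$ or by $\dist(x,X\setminus\om)+\dist(y,X\setminus\om)$; in the first case the direct transport is retained, and in the second case each bit of mass at $(x,y)$ is split into two bits at $(x,x'(x))$ and $(y'(y),y)$. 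The key cost estimate is the superadditivity $a^p+b^p\leq(a+b)^p$ valid for $p\geq 1$ and $a,b\geq 0$, which guarantees that the split route costs at most $\delta(x,y)^p$. After verifying that $\pi_1\gamma|_\om=\mu$ and $\pi_2\gamma|_\om=\nu$, I obtain $\bWass_p(\mu,\nu)\leq\Wass_p(\iota(\mu),\iota(\nu))+\varepsilon$; letting $\varepsilon\to 0$ completes the proof of the isometry (injectivity of $\iota$ being immediate from $\iota(\mu)|_\om=\mu$).

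The main obstacle I anticipate is the measurable selection of approximate nearest points $x'(y),y'(x)$ in the closed non-empty set $X\setminus\om$, because exact nearest points need not exist in a general separable metric space; this should be handled by an $\varepsilon$-approximation combined with a selection theorem such as Kuratowski--Ryll-Nardzewski, and one then passes to the limit $\varepsilon\to 0$. The other point that requires care is the nonnegativity of the atom $c$ at $(\partial,\partial)$ in the first direction, which is precisely where the hypothesis $\mu(\om),\nu(\om)\leq 1$ built into $\bWass^1_p(\om)$ is used in an essential way.
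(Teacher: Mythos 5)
Your proposal is correct and follows essentially the same strategy as the paper: in the forward direction you pushforward the boundary mass to $\{\partial\}$ and pad with an atom $\kappa\ldir(\partial,\partial)\rdir$, checking $\kappa\geq 0$ via $\mu(\om),\nu(\om)\leq 1$; in the reverse direction you split along the set $E=\{\delta(x,y)=d(x,y)\}$, reroute the complement through $(1+\varepsilon)$-approximate nearest points in $X\setminus\om$ (the paper handles measurability more simply by choosing a Borel selection with countable image, which separability supplies directly, rather than invoking Kuratowski--Ryll-Nardzewski), and use superadditivity of $t\mapsto t^p$. The bookkeeping of marginals, the role of the $(\partial,\partial)$ atom, and the cost estimates all agree with the paper's argument.
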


	\begin{proof}
		Given a coupling for $\mu,\nu$ we use it to construct a coupling for $\iota(\mu),\iota(\nu)$ and vice versa.
		
		First let $\mu,\nu\in \bWass_p^1(\om)$ and suppose that $\gamma\in\M(X\times X)$ is a coupling for $\mu$ and $\nu$ in $\bWass_p(\om)$.
		Let $\pi_\partial(x)=\partial$ for all $x\in X$ and define $\gamma'\in \M(\scut\times\scut)$ as
		\begin{align*}
			\gamma' &= \gamma|_{\om\times\om} + (\pi_\partial \times \operatorname{id})_{\#} \gamma|_{X\setminus \om \times \om} + (\operatorname{id} \times \pi_\partial)_{\#} \gamma|_{ \om \times X\setminus \om}\\
			&\qquad +(2-[\gamma(\om\times\om) +\gamma(X\setminus \om\times \om) + \gamma(\om\times X\setminus \om)])\ldir (\partial,\partial) \rdir.
		\end{align*}
		For notational convenience, we let $\kappa$ denote the coefficient of $\ldir (\partial,\partial) \rdir$ in this expression.
		Then
		\begin{align*}
			\pi_1 \gamma' &= \pi_1 (\gamma|_{\om\times \om}) + \gamma(X\setminus \om \times \om)\ldir \partial \rdir + \pi_1(\gamma|_{\om\times X\setminus \om}) + \kappa \ldir \partial \rdir\\
			&= \pi_1 (\gamma|_{\om\times \om}) + \pi_1(\gamma|_{\om\times X\setminus \om}) + (2-[\gamma(\om\times\om)+\gamma(\om\times X\setminus \om)])\ldir \partial \rdir\\
			&= \pi_1(\gamma|_{\om\times X}) + (2-\gamma(\om\times X))\ldir \partial \rdir\\
			&= \mu + (2-\mu(X))\ldir \partial \rdir = \iota(\mu).
		\end{align*}
		Similarly, by symmetry, $\pi_2\gamma' = \iota(\nu)$.
		Thus $\gamma'$ is a coupling of $\iota(\mu)$ and $\iota(\nu)$ in $\Wass_p(\scut)$.
		Moreover,
		\begin{align}\label{1coup}
			\int \delta(x,y)^p\d\gamma'(x,y) &= \int_{\om\times\om} \delta(x,y)^p \d \gamma(x,y)+ \int_{X\setminus \om\times\om} \delta(\partial,y)^p\d\gamma(x,y)\notag\\
			&\quad + \int_{\om\times X\setminus \om} \delta(x,\partial)^p d\gamma(x,y) +\kappa\delta(\partial,\partial)^p\notag\\
			&\leq  \int_{\om\times\om} d(x,y)^p \d\gamma(x,y) + \int_{X\setminus \om\times\om} d(x,y)^p \d\gamma(x,y)\notag\\
			&\quad+ \int_{\om\times X\setminus \om} d(x,y)^p\d\gamma(x,y)\notag\\
			&=\int d(x,y)^p\d\gamma(x,y).
		\end{align}
		Therefore,
		\[\Wass_p(\iota(\mu),\iota(\nu)) \leq \bWass_p(\mu,\nu).\]
		
		Conversely, let $\gamma$ be a coupling for $\iota(\mu)$ and $\iota(\nu)$ in $\Wass_p(\scut)$.
		Define the closed set
		\[E=\{(x,y)\in \om\times\om: \delta(x,y) = d(x,y)\}.\]
		Fix $\epsilon>0$ and for each $x\in \om$, let $c(x)\in X\setminus \om$ with
		\[d(x,c(x)) \leq (1+\epsilon)\dist(x,X\setminus \om).\]
		Since $X$ is separable, $c$ may be chosen to be a Borel function with countable image.
		Let $c_1=(\operatorname{id}\times c)\circ \pi_1$ and $c_2=(c\times\operatorname{id})\circ \pi_2$ and define
		\begin{equation}\label{gammaprimed}
			\gamma' = \gamma|_E + (c_1)_{\#} \gamma|_{(\om\times\scut)\setminus E} + (c_2)_{\#} \gamma|_{(\scut\times\om)\setminus E} \in \M(X\times X).
		\end{equation}
		Note that, since $\pi_1((c_1)_{\#}\gamma)$ is supported on $X\setminus \om$, its restriction to $\om$ equals 0.
		Therefore,
		\begin{equation*}(\pi_1\gamma')|_\om = (\pi_1\gamma|_E)|_\om + (\pi_1\gamma|_{(\om\times\scut)\setminus E})|_\om + 0 = (\pi_1 \gamma)|_\om = \mu.
		\end{equation*}
		Similarly, by symmetry, $(\pi_2\gamma')|_\om = \nu$.
		Hence $\gamma$ is a coupling for $\mu$ and $\nu$ in $\bWass_p(\om)$.

		Now, for any $(x,y)\in (\om\times\om)\setminus E$,
		\[d(x,c(x))^p + d(c(y),y)^p \leq (1+\epsilon)^p(\delta(x,\partial)^p + \delta(\partial,y)^p) \leq (1+\epsilon)^p\delta(x,y)^p.\]
		Therefore,
		\begin{align}\label{coup2}
			\int_{X\times X} d(x,y)^p\d\gamma'(x,y) &= \int_E d(x,y)^p\d\gamma(x,y) + \int_{(\om\times\scut)\setminus E} d(x,c(x))^p\d\gamma(x,y)\notag\\
			&\quad + \int_{(\scut\times \om)\setminus E} d(c(y),y)^p\d\gamma(x,y)\notag\\
			&\leq \int_E d(x,y)^p\d\gamma(x,y) + \int_{(\scut\times \scut)\setminus E} (1+\epsilon)^p\delta(x,y)^p\d\gamma(x,y)\notag\\
			&\leq (1+\epsilon)^p\int_{\scut\times\scut} \delta(x,y)^p\d\gamma(x,y).
		\end{align}
		Since $\epsilon>0$ is arbitrary, this shows that
		\[\Wass_p(\iota(\mu),\iota(\nu)) \geq \bWass_p(\mu,\nu).\]
	\end{proof}

	\begin{remark}
		After the first version of this article appeared, we were made aware that the statement of \Cref{scut-equiv}, for the case $\om=U$ as defined in our introduction, appears in the work of Divol and Lacombe \cite[Proposition 3.15]{zbMATH07421454}.
		Note that our proof does not rely on the existence of unique closest points in $\partial \om$, whilst the one in \cite{ zbMATH07421454} does.
		However, a flaw in their argument makes the proof incorrect even for the case of $\om=U$.

		Central to their proof is the definition of a measure $\tilde \pi'$ and the claim that it is a coupling of $\tilde\mu$ and $\tilde\nu$ in $\Wass_p(\scut)$ (using the variables of \cite[Lemma 3.17]{zbMATH07421454}).
		Using this they derive \cite[Equation (3.8)]{zbMATH07421454} from which the proof is concluded.
		However, examples such as \cite[Remark 1.9]{zbMATH07216174} show this equation to be false.
		Moreover, this equation would imply that $\delta=d$ in $\om$.
		These contradictions originate in the fact that $\tilde\pi'$ is not a coupling of $\tilde\mu$ and $\tilde \nu$, which can be verified by comparing the total measure of $\tilde\pi'$ to that of $\tilde\mu,\tilde\nu$ or $\tilde\pi$.
	\end{remark}

	Since the map
		\begin{align*}
			2\Wass_p(\scut) &\to \Wass_p(\scut)\\
			\mu &\mapsto \mu/2
		\end{align*}
	has distortion $2^{1/p}$,
	we obtain the following corollary.

	\begin{corollary}
		Let $X$ be a separable metric space and $\om\subset X$ be non-empty and proper.
		Then $\bWass_p^1(\om)$ bi-Lipschitz embeds into $\Wass_p(\scut)$ with distortion $2$.
	\end{corollary}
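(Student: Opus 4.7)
The plan is to take the isometric embedding $\iota$ given by Lemma \ref{scut-equiv} and compose it with the mass-halving map $s\colon 2\Wass_p(\scut)\to\Wass_p(\scut)$ defined by $s(\mu)=\mu/2$. This $s$ is well-defined, as any total-mass-$2$ measure on $\scut$ becomes a probability measure after halving and thus lies in $\Wass_p(\scut)$.

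The first step is to compute how $s$ interacts with the $p$-Wasserstein distance. The key observation is that, for $\mu,\nu$ of total mass $2$, the assignment $\gamma\mapsto\gamma/2$ is a bijection between couplings of $(\mu,\nu)$ in $\M(\scut\times\scut)$ and couplings of $(\mu/2,\nu/2)$, since the marginals scale linearly. Under this bijection the transport cost satisfies
\[
\int\delta(x,y)^p\d(\gamma/2)(x,y)=\tfrac{1}{2}\int\delta(x,y)^p\d\gamma(x,y),
\]
so taking infima yields $\Wass_p(\mu/2,\nu/2)=2^{-1/p}\Wass_p(\mu,\nu)$. Thus $s$ is a similarity of ratio $2^{-1/p}$.

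Combining this identity with the isometry $\iota$ from Lemma \ref{scut-equiv}, I obtain for all $\mu,\nu\in\bWass_p^1(\om)$:
\[
\Wass_p\bigl((s\circ\iota)(\mu),(s\circ\iota)(\nu)\bigr)=2^{-1/p}\,\bWass_p(\mu,\nu).
\]
Hence $s\circ\iota$ is bi-Lipschitz, with $\Lip(s\circ\iota)=2^{-1/p}$ and $\Lip((s\circ\iota)^{-1})=2^{1/p}$. Since $p\geq 1$, both Lipschitz constants lie in $[2^{-1},2]$, giving the stated bi-Lipschitz embedding with distortion bound. There is essentially no obstacle here: once Lemma \ref{scut-equiv} is in hand, the result follows from the routine linear scaling of Wasserstein couplings.
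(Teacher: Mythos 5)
Your proposal is correct and takes the same route as the paper: compose the isometry $\iota$ from \Cref{scut-equiv} with the halving map $s(\mu)=\mu/2$, using the coupling bijection $\gamma\mapsto\gamma/2$ to see that $s$ rescales $\Wass_p$ by exactly $2^{-1/p}$. In fact your computation gives something slightly sharper than what the corollary states: since $s\circ\iota$ multiplies every distance by the single factor $2^{-1/p}$, it is a similarity, so under the paper's own definition $\Lip(f)\cdot\Lip(f^{-1})$ the distortion is $2^{-1/p}\cdot 2^{1/p}=1$. The figure ``$2$'' in the corollary (and the claim preceding it that $s$ ``has distortion $2^{1/p}$'') implicitly reads distortion as the larger of the two Lipschitz constants, $\max\{\Lip(f),\Lip(f^{-1})\}=2^{1/p}\leq 2$; your observation that both constants lie in $[2^{-1},2]$ is exactly this. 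Either way the proof is sound, and in the former (multiplicative) convention your argument actually yields an embedding of distortion $1$.
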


	The same proof as the one for \Cref{scut-equiv} shows that the full space $\bWass_p(\om)$ isometrically embeds into $\M(\scut)$.
	\begin{lemma}
		\label{inf-boundary}
	Let $X$ be a separable metric space and $\om\subset X$ non-empty and proper.
	For any $p\geq 1$,
		\begin{align*}
			\bWass_p(\om) &\to (\M(\scut),\Wass_p)\\
			\iota'(\mu) &= \mu + \infty \cdot \ldir \partial \rdir
		\end{align*}
		is an isometric embedding.
	\end{lemma}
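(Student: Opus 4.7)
The plan is to follow the proof of \Cref{scut-equiv} essentially verbatim, with the finite atom $(2-\mu(\om))\ldir \partial \rdir$ at $\partial$ replaced by an infinite atom. The key observation that makes this work is that $\delta(\partial,\partial)=0$, so any atom at $(\partial,\partial)$ in a coupling costs nothing, regardless of whether its mass is finite or infinite.

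For the inequality $\Wass_p(\iota'(\mu),\iota'(\nu))\le \bWass_p(\mu,\nu)$, given a coupling $\gamma\in\M(X\times X)$ for $\mu$ and $\nu$ in $\bWass_p(\om)$, I would define
\[
\gamma' \;=\; \gamma|_{\om\times\om} + (\pi_\partial\times \operatorname{id})_{\#}\gamma|_{X\setminus\om\times\om} + (\operatorname{id}\times\pi_\partial)_{\#}\gamma|_{\om\times X\setminus\om} + \infty\cdot\ldir(\partial,\partial)\rdir.
\]
Repeating the marginal computation from \Cref{scut-equiv} shows that the non-$\{\partial\}$ part of $\pi_1\gamma'$ equals $\mu$, while the atom contributes infinite mass at $\partial$; hence $\pi_1\gamma'=\iota'(\mu)$, and symmetrically $\pi_2\gamma'=\iota'(\nu)$. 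The cost bound of \eqref{1coup} carries over unchanged because the new atom at $(\partial,\partial)$ contributes $\infty\cdot 0=0$.

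For the reverse inequality, let $\gamma$ be a coupling of $\iota'(\mu),\iota'(\nu)$ in $\Wass_p(\scut)$ with finite cost (otherwise there is nothing to show). Here I would first note that, although $\gamma$ has infinite total mass, its restriction to $(\om\times\scut)\cup(\scut\times\om)$ is finite: the first marginal on $\om$ equals $\mu\in\bM_p(\om)$, the second marginal on $\om$ equals $\nu\in\bM_p(\om)$, and the remaining mass of $\gamma$ lives on $\{(\partial,\partial)\}$ where it costs nothing. With this in hand, defining $E$ and $c$ exactly as in \Cref{scut-equiv} and setting
\[
\gamma' \;=\; \gamma|_E + (c_1)_{\#}\gamma|_{(\om\times\scut)\setminus E} + (c_2)_{\#}\gamma|_{(\scut\times\om)\setminus E} \in \M(X\times X)
\]
yields a finite Borel measure on $X\times X$; the marginal identity $(\pi_1\gamma')|_\om=\mu$ and the cost estimate \eqref{coup2} proceed exactly as before, the atom at $(\partial,\partial)$ being discarded by the restrictions to $\om\times\scut$ and $\scut\times\om$.

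The only mildly delicate point, and the one I would be most careful about, is the book-keeping of measurability and the argument that the $(\partial,\partial)$-atom and the infinite $\partial$-marginals do not spoil either the coupling property or the cost. Once one records that $\iota'(\mu)$ is $\sigma$-finite (being finite on $\om$ and purely atomic with one infinite atom at $\partial$) and that any finite-cost coupling $\gamma$ has finite mass off the diagonal point $(\partial,\partial)$, the arguments from \Cref{scut-equiv} transfer line-by-line.
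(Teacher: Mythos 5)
Your proposal is correct and follows the same route as the paper's own proof: both directions reuse the constructions from \Cref{scut-equiv} verbatim, replacing the finite atom $(2-\mu(\om))\ldir\partial\rdir$ with $\infty\cdot\ldir\partial\rdir$, and observe that the $(\partial,\partial)$-atom costs nothing and is discarded by the restrictions in \eqref{gammaprimed}. The extra book-keeping you add about $\sigma$-finiteness and finite mass off $(\partial,\partial)$ is sensible but the paper leaves it implicit, as the arguments in \eqref{1coup} and \eqref{coup2} already go through unchanged.
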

	\begin{remark}
	For any $\mu\in \bM_p(\om)$,
		\[\Wass_p(\iota'(\mu),\infty\cdot\ldir \partial\rdir)=\bWass_p(\mu,0)<\infty.\]
		Therefore, the triangle inequality for $\Wass_p$ implies that $\Wass_p$ is indeed a metric on the image of $\iota'$.
	\end{remark}

	\begin{proof}[Proof of \Cref{inf-boundary}]
			If $\mu,\nu\in \bM_p(\om)$ then
		\[\gamma' = \gamma|_{\om\times\om} + (\pi_\partial \times \operatorname{id})_{\#} \gamma|_{X\setminus \om \times \om} + (\operatorname{id} \times \pi_\partial)_{\#} \gamma|_{ \om \times X\setminus \om} +\infty \cdot \ldir (\partial,\partial)\rdir\]
		defines a coupling of $\iota'(\mu)$ and $\iota'(\nu)$.
		The calculation in \eqref{1coup} shows that
		\[\Wass_p(\iota'(\mu),\iota'(\nu)) \leq \bWass_p(\mu,\nu).\]

		Conversely, if $\mu,\nu\in \bM_p(\om)$, then $\gamma'$ as defined in \eqref{gammaprimed} is a coupling for $\mu,\nu$ and \eqref{coup2} shows that
		\[\Wass_p(\iota'(\mu),\iota'(\nu)) \geq \bWass_p(\mu,\nu).\]
		\end{proof}
		
	\subsection{The shortcut metric space is not doubling}

	A metric space $X$ is \emph{doubling} if there exists $N\in\N$ such that each ball $B\subset X$ is covered by $N$ balls of half the radius of $B$.
	\begin{lemma}
		\label{non-doubling}
		For $n\geq 2$, let $\om\subset \R^n$ be non-empty and open such that $\overline \om$ is a proper subset of $\R^n$.
		Then for any $N\in\N$ and any sufficiently small $\epsilon>0$, there exist $y_1,\ldots,y_N\in \om$ with $\delta(y_i,y_j)=\epsilon$
		for each $i\neq j$.
		In particular, $\scut$ is not doubling.
	\end{lemma}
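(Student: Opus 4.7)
The plan is to produce $y_1, \ldots, y_N \in \om$ satisfying $\dist(y_i, \R^n \setminus \om) = \epsilon/2$ for every $i$ and $\|y_i - y_j\| \geq \epsilon$ whenever $i \neq j$; this forces
\[\delta(y_i, y_j) = \min\{\|y_i - y_j\|,\ \dist(y_i, \R^n\setminus\om) + \dist(y_j, \R^n\setminus\om)\} = \min\{\|y_i - y_j\|, \epsilon\} = \epsilon,\]
and the non-doubling conclusion is then immediate since all $N$ points lie in the $\delta$-ball of radius $\epsilon$ about $y_1$.

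First, fix $p_0 \in \om$ and set $R := \dist(p_0, \R^n \setminus \om) > 0$. Since $\overline \om$ is proper, $\R^n \setminus \overline \om$ is non-empty and open, so we may choose $z_0 \in \R^n$ and $s_0 > 0$ with $\overline B(z_0, s_0) \subset \R^n \setminus \overline \om$. Let $V \subset \S$ denote the set of unit vectors $v$ such that the ray $\{p_0 + tv : t \geq 0\}$ meets $\overline B(z_0, s_0)$. Since $p_0$ lies strictly outside $\overline B(z_0, s_0)$, $V$ is a spherical cap of positive $(n-1)$-dimensional measure. For each $v \in V$, the continuous, $1$-Lipschitz function $t \mapsto \dist(p_0 + tv, \R^n \setminus \om)$ starts at $R$ and vanishes at the first time $t^*(v) < \infty$ when the ray hits $\partial \om$. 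Provided $\epsilon < 2R$, the intermediate value theorem supplies $t_\epsilon(v) \in (0, t^*(v))$ such that $y(v) := p_0 + t_\epsilon(v)v$ satisfies $\dist(y(v), \R^n \setminus \om) = \epsilon/2$, and the $1$-Lipschitz bound yields $t_\epsilon(v) \geq R - \epsilon/2$.

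The key geometric estimate is that, for $v, v' \in V$ making angle $\theta$, minimising $\|tv - t'v'\|^2 = t^2 + t'^2 - 2tt'\cos\theta$ over $t, t' \geq R - \epsilon/2$ (the infimum being attained at $t = t' = R - \epsilon/2$) gives
\[\|y(v) - y(v')\| \geq 2(R - \epsilon/2)\sin(\theta/2).\]
The dimension hypothesis $n \geq 2$ enters last: because $V$ has positive $(n-1)$-dimensional measure, for any $N \in \N$ we can select $v_1, \ldots, v_N \in V$ with pairwise angular separations exceeding some $\theta_0 = \theta_0(N) > 0$. Taking $\epsilon$ small enough that $2(R - \epsilon/2)\sin(\theta_0/2) \geq \epsilon$ then forces $\|y(v_i) - y(v_j)\| \geq \epsilon$, and setting $y_i := y(v_i)$ completes the construction. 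The main technical step is the Euclidean lower bound on $\|y(v) - y(v')\|$, since one must convert angular separation on $\S$ into metric separation in $\om$ while controlling the varying radii $t_\epsilon(v)$; after that, the packing argument on $\S$ is routine, and $n \geq 2$ is used precisely to guarantee that $V$ admits arbitrarily many well-separated directions.
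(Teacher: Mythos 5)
Your proof is correct and is essentially the same construction as the paper's: fix a base point and an exterior region, consider rays joining them, use the intermediate value theorem to locate points at boundary-distance $\epsilon/2$ along $N$ directions, and use the angular separation of the rays to force Euclidean separation at least $\epsilon$ once $\epsilon$ is small. The only cosmetic difference is the direction of travel (the paper runs segments from the exterior point $x$ back to interior points on a sphere about $x$ and bounds the segments away from one another via a ball around $x$ disjoint from $\om$, while you run rays from an interior $p_0$ outward and get the lower bound on the radial coordinate $t_\epsilon(v)\ge R-\epsilon/2$ from $1$-Lipschitzness of $\dist(\cdot,\R^n\setminus\om)$); both devices deliver the same lower bound on pairwise distances and the argument is otherwise identical.
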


	\begin{proof}
		Let $x\not\in \overline \om$ and $y\in \om$.
		For $N\in \N$, let $y_1,\ldots,y_N\in \om$ lie on the circle centred on $x$ of radius $\|x-y\|$ (such points exist since $\om$ is open). 
		For each $1\leq i\leq N$, let $l'_i$ be the line segment connecting $y_i$ to $x$ and let $l_i$ be the connected component of $l'_i\cap \om$ containing $y_i$.
		Since $x\not\in \overline \om$, there exists $\eta>0$ such that
		\[\inf\{ \|z-z'\| : z\in l_i,\ z'\in l_j,\ i\neq j\} >\eta.\]

		Now, $\dist(\cdot,\partial \om)$ is continuous on each $l_i$ and converges to 0 as one travels along $l_i$ towards $\partial \om$.
		Therefore, for each sufficiently small $\epsilon>0$ and each $1\leq i \leq N$, there exists $z_i\in l_i$ with $\dist(z_i,\partial \om)=\epsilon/2$.
		In particular, if $\epsilon<\eta$, then $\delta(z_i,z_j)=\epsilon$ for each $1\leq i\neq j\leq N$.

		Finally, we see that $y_i\in B(y_1,\epsilon)$ for each $1\leq j\leq N$, but we require at least $N$ balls of radius $\epsilon/4$ to cover $B(y_1,\epsilon)$.
		Since $N\in\N$ is arbitrary, $\scut$ cannot be doubling.
	\end{proof}

	\begin{remark}
		\Cref{non-doubling} is sharp in the following sense.
		If $\om=(-1,1)\subset \R$, then $\scut$ is bi-Lipschitz equivalent to a Euclidean circle.
		For any $n\in \N$, if $\om= \R^n\setminus \{0\}$, then $\scut$ is isometric to $\R^n$.
		In both of these cases, the conclusion of \Cref{non-doubling} fails.
	\end{remark}

	Note that each Euclidean space is doubling and that the doubling property is preserved under taking subsets and bi-Lipschitz images.
	Therefore, if a metric space is bi-Lipschitz embeddable into some Euclidean space, it must necessarily be doubling.

	\begin{corollary}
		\label{not-embed-euclidean}
		For $n\geq 2$ let $\om\subset \R^n$ be non-empty and open such that $\overline \om$ is a proper subset of $\R^n$.
		Then $\scut$ is not bi-Lipschitz embeddable into any Euclidean space.
	\end{corollary}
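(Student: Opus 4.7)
The plan is to deduce the corollary directly by combining \Cref{non-doubling} with the observation in the paragraph just preceding the statement. The chain of reasoning is: suppose for contradiction that there is a bi-Lipschitz embedding $f \colon \scut \to \R^N$ for some $N\in\N$. Then $f(\scut)$ is a subset of the Euclidean space $\R^N$, which is well known to be doubling with constant depending only on $N$. The doubling property is inherited by arbitrary subsets (a cover of a ball $B \subset f(\scut)$ can be obtained by intersecting a cover of the corresponding ball in $\R^N$ with $f(\scut)$, up to a modest adjustment of the radius). Hence $f(\scut)$ is doubling.

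Next, bi-Lipschitz homeomorphisms preserve the doubling property: if $g \colon Y \to Z$ is bi-Lipschitz with constant $L$ and $Z$ is $N$-doubling, then any ball $B(y,r) \subset Y$ satisfies $g(B(y,r)) \subset B(g(y),Lr)$, and the latter is covered by $N$ balls in $Z$ of radius $Lr/2$; pulling these balls back through $g^{-1}$ yields a cover of $B(y,r)$ by balls of radius $L^2 r/2$, and iterating a bounded number of times gives a cover by balls of radius $r/2$ whose cardinality depends only on $N$ and $L$. Applying this to $g = f^{-1} \colon f(\scut) \to \scut$ shows that $\scut$ itself is doubling.

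However, \Cref{non-doubling} produces, for each $N \in \N$ and each sufficiently small $\epsilon > 0$, points $y_1,\dots,y_N \in \om$ that are pairwise at distance exactly $\epsilon$ in $\delta$. These all lie in the ball $B(y_1,\epsilon)$ in $\scut$, yet any ball of radius $\epsilon/4$ can contain at most one of them by the triangle inequality. Hence at least $N$ balls of radius $\epsilon/4$ are needed to cover $B(y_1,\epsilon)$, and since $N$ is arbitrary, $\scut$ cannot be doubling. This contradicts the previous paragraph and completes the proof.

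There is no genuine obstacle here; the statement is essentially a formal consequence of \Cref{non-doubling} together with the (standard) stability of doubling under subsets and bi-Lipschitz maps. The only minor care required is the quantitative bookkeeping in the second paragraph showing that bi-Lipschitz images of doubling spaces are doubling, which is routine.
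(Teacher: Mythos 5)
Your proposal is correct and follows exactly the paper's approach: combine \Cref{non-doubling} with the standard facts that Euclidean space is doubling and that the doubling property is inherited by subsets and preserved under bi-Lipschitz maps. You simply spell out the quantitative bookkeeping that the paper leaves to the reader.
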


	\subsection{The space of unordered tuples of at most \texorpdfstring{$\alm$}{m} points}

	\begin{definition}
		\label{atmost-tuples}
		Let $X$ be a metric space, $\om\subset X$ non-empty and proper and $\alm\in\N$.
		Define the \emph{space of unordered tuples of at most $m$ points} as
		\[
			\B(\om) = \bigcup_{k=1}^\alm \mathcal{A}_k(\om),
		\]
		with the metric inherited from $\bWass_2(\om)$.
	\end{definition}

	This space is naturally identified with a subset of $\A(\scut)$.
	\begin{corollary}
		\label{B-embed-A}
		Let $\alm\in\N$.
		For any separable metric space $X$ and non-empty and proper $\om\subset X$, $\B(\om)$ isometrically embeds into $\A(\scut)$ via the map
		\[\sum_{i=1}^k \ldir x_i\rdir \mapsto \sum_{i=1}^k \ldir x_i\rdir + (2m-k)\ldir \partial \rdir.\]
	\end{corollary}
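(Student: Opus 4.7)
The plan is to reduce this to Lemma~\ref{scut-equiv} via a rescaling argument. For $\mu = \sum_{i=1}^{k_1}\ldir x_i\rdir$ and $\nu = \sum_{i=1}^{k_2}\ldir y_i\rdir$ in $\B(\om)$, we have $k_1,k_2 \leq \alm$, so the rescaled measures $\mu/\alm$ and $\nu/\alm$ have total mass at most one and therefore belong to $\bWass_2^1(\om)$.

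First, I would invoke Lemma~\ref{scut-equiv} on these rescaled measures to obtain
\[\bWass_2(\mu/\alm,\nu/\alm) = \Wass_2(\iota(\mu/\alm),\iota(\nu/\alm)),\]
where $\iota(\mu/\alm) = \mu/\alm + (2 - k_1/\alm)\ldir\partial\rdir$, and analogously for $\nu$. Next, I would invoke the elementary scaling identity $\Wass_p(c\sigma,c\tau) = c^{1/p}\Wass_p(\sigma,\tau)$, valid for $c>0$ and immediate from the bijection $\gamma\mapsto c\gamma$ on the set of couplings, along with its analogue for $\bWass_p$. Multiplying the previous display by $\alm^{1/2}$ then gives
\[\bWass_2(\mu,\nu) = \Wass_2\!\left(\mu + (2\alm - k_1)\ldir\partial\rdir,\ \nu + (2\alm - k_2)\ldir\partial\rdir\right),\]
which is precisely the isometry claimed by the corollary.

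The remaining task is essentially bookkeeping: each image measure is a sum of exactly $k_i + (2\alm - k_i) = 2\alm$ Dirac masses (counted with multiplicity) in $\scut$, so the map does land in the space of unordered tuples of a fixed number of points of $\scut$. I do not anticipate any real obstacle, since all substantive content already resides in Lemma~\ref{scut-equiv}; the present corollary amounts to rescaling that result and observing that elements of $\B(\om)$ are sent to genuine tuples rather than to general measures on $\scut$.
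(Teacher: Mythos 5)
Your proof is correct and takes the same route as the paper: the paper's proof reads verbatim ``Embed $\B(X)$ into $\bWass^1_p(X)$ by $\mu\mapsto \mu/m$, apply Lemma~\ref{scut-equiv}, and then embed into $\A(\scut)$ by $\mu\mapsto m\mu$,'' which is exactly your rescaling argument. You simply spell out the scaling identity $\Wass_p(c\sigma,c\tau)=c^{1/p}\Wass_p(\sigma,\tau)$ that the paper leaves implicit, and you correctly note that the image consists of $2\alm$-tuples (so strictly speaking the target is $\mathcal{A}_{2\alm}(\scut)$, a harmless shift in the paper's indexing).
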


	\begin{proof}
		Embed $\B(X)$ into $\bWass^1_p(X)$ by $\mu\mapsto \mu/m$, apply \Cref{scut-equiv}, and then embed into $\A(\scut)$ by $\mu\mapsto m\mu$.
	\end{proof}
	
	\section{A bi-Lipschitz description of \texorpdfstring{$\A(\scut)$}{Am(Ω*)} in terms of \texorpdfstring{$\A(\R^{n+1})$}{Am(Rn+1)}}\label{coord-proj}

	To construct the bi-Lipschitz embedding from \Cref{main-intro}, it would be natural to adapt the techniques from the proof of \Cref{almgren} to our setting.
	However, the proof of \Cref{almgren} strictly depends on both, the linear structure of $\R^n$ (in particular the existence of projections), and the compactness of the unit ball.
	Although $\om\subset\R^n$ as a set, $\delta$ bears no relationship to the linear structure of $\R^n$ and this fact prohibits the direct use of Almgren's techniques.
	On the other hand, whilst it is possible to find a bi-Lipschitz embedding of $\scut$ into $\ell_2$ to gain a linear structure, this comes at the expense of compactness of the unit ball.
	Thus it is not possible to modify Almgren's proof to our setting.

	In order to prove \Cref{main-intro} we will use a Whitney decomposition $\cubes$ of $\om$ into cubes 
	\[\om = \bigcup_{\cube\in\cubes} \cube\]
	(see \Cref{whitney}) such that, within each $\cube$, $\delta$ is given by $\|\cdot\|$.
	Consequently, $\A(\cube,\delta)=\A(\cube,\|\cdot\|)$.
	\Cref{almgren} then gives a bi-Lipschitz embedding of each $\A(Q,\delta)$ into $\R^N$ and it would be favourable to use these embeddings as ``coordinate projections" to construct a global embedding into Hilbert space.
	Of course, the union of the $\A(\cube)$ does not cover $\A(\scut)$ and therefore we cannot simply define coordinate projections by taking restrictions to each $\cube$.
	Nevertheless, the fact that $\A(Q,\delta)=\A(Q,\|\cdot\|)$ enables us to construct a map $\hLi_\cube\colon \A(\scut)\to \A(\R^{n+1})$ which, roughly speaking, acts as a smooth projection to $\A(\cube)$.

	The main result of this section shows that the $\hLi_\cube$ can be combined to define a bi-Lipschitz embedding of $\A(\scut)$ into the following metric space.
	\begin{definition}\label{lsum-def}
		Let $\cubes$ be a countable set and define
		\begin{equation*}
			\lsum:=\sum_{\cube\in\cubes} \A(\R^{n+1})
		\end{equation*}
		to be the $\ell_2$-sum of copies of $\A(\R^{n+1})$.
		That is, $\lsum$ consists of sequences
		\[\sum_{\cube\in\cubes} a_\cube\]
		of elements of $\A(\R^{n+1})$ for which
		\[\sum_{\cube\in\cubes}\Wass_2^2(a_\cube,0)<\infty,\]
		where $0=\sum_{i=1}^\alm \ldir 0\rdir$,
		equipped with the metric
		\[\sqrt{\sum_{\cube\in\cubes} \Wass_2^2(a_\cube,a'_\cube)}.\]
	\end{definition}

	Once we have an embedding into $\lsum$, we will show that it is possible to find an embedding into $\ell_2$.
	Indeed, in \Cref{sec4}, we apply \Cref{almgren} to each term in the definition of $\lsum$ to obtain a bi-Lipschitz embedding of $\lsum$ into $\ell_2$.

	\subsection{A Whitney decomposition \texorpdfstring{of $\om$}{}}
	
	To construct the embedding into $\lsum$, we will use a Whitney decomposition of $\om$.
	For a cube $\cube\subset \R^n$, let $l(\cube)$ denote the side length of $\cube$.
	\begin{proposition}[Appendix J \cite{zbMATH06313565}]
		\label{whitney}
		Let $\om\subset \R^n$ be non-empty, open and proper.
		There exists a family of closed cubes $\cubes$ such that
		\begin{enumerate}
			\item $\cup \cubes = \om$ and the elements of $\cubes$ have disjoint interiors.
			\item $\sqrt{n} l(\cube) \leq \dist(\cube,\partial\om) \leq 4 \sqrt{n} l(\cube)$ for all $\cube\in \cubes$.
			\item If $\cube,\cube'\in \cubes$ and $\cube\cap \cube'\neq\emptyset$ then
			\[\frac{1}{4} \leq \frac{l(\cube)}{l(\cube')} \leq 4.\]
			We say that $\cube,\cube'$ are \emph{neighbours}.
			\item Each $\cube\in \cubes$ has at most $12^n$ neighbours.
		\end{enumerate}
	\end{proposition}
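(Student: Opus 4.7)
The plan is to implement the classical Whitney construction on the dyadic grid. For each $k\in\Z$, let $\mathcal{D}_k$ denote the family of closed cubes of side length $2^k$ with vertices in $2^k\Z^n$, and recall that any two dyadic cubes from $\bigcup_k \mathcal{D}_k$ are either nested or have disjoint interiors. As a first step I would form an intermediate family $\cubes^0\subset\bigcup_k \mathcal{D}_k$ consisting of every cube $\cube$ satisfying
\[
2\sqrt{n}\,l(\cube) \leq \dist(\cube,\partial\om) \leq 4\sqrt{n}\,l(\cube).
\]
Given $x\in\om$ with $d:=\dist(x,\partial\om)$ and any $k$ with $d/(5\sqrt{n})\leq 2^k\leq d/(2\sqrt{n})$, the dyadic cube of $\mathcal{D}_k$ through $x$ lies in $\cubes^0$ (the gap in $k$ is non-empty because $5/2>2$). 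Hence $\cubes^0$ covers $\om$, and each of its elements satisfies property (2) by construction.

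To upgrade to a family with disjoint interiors, I would pass to the sub-family $\cubes\subset\cubes^0$ of inclusion-maximal cubes. Dyadic nesting-or-disjointness forces any two cubes in $\cubes$ to have disjoint interiors, while maximality preserves the cover: every $x\in\om$ is contained in some cube of $\cubes^0$ and hence in its maximal dyadic ancestor in $\cubes^0$. Property (2) passes to $\cubes$ since it is stated per-cube. For property (3), if $\cube,\cube'\in\cubes$ share a point $x$, then
\[
\sqrt{n}\,l(\cube) \leq \dist(\cube,\partial\om) \leq \dist(x,\partial\om) \leq \dist(\cube',\partial\om) + \sqrt{n}\,l(\cube') \leq 5\sqrt{n}\,l(\cube'),
\]
so $l(\cube)\leq 5\,l(\cube')$; since $l(\cube)/l(\cube')$ is a power of $2$ this forces $l(\cube)/l(\cube')\leq 4$, and the symmetric bound follows by swapping roles. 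Property (4) is then a volume-packing estimate: every neighbour of $\cube$ lies in a fixed dilation of $\cube$ of diameter $O(l(\cube))$ and has side length at least $l(\cube)/4$, so at most $12^n$ of them can fit.

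The main obstacle is constant calibration. The ratio $4$ in property (3) is delicate: because $l(\cube)/l(\cube')$ is a power of $2$, the chain of inequalities above only yields the sharp bound $4$ if the thresholds $2\sqrt{n}$ and $4\sqrt{n}$ in the definition of $\cubes^0$ sit in exactly the right ratio, and a loosening would produce neighbours with size ratio $8$ or larger. Similarly, verifying that a neighbour of $\cube$ cannot escape a bounded dilation of $\cube$ requires combining (2) with the lower bound $l(\cube')\geq l(\cube)/4$ just obtained. Once these constants are fixed the remaining verifications — the cover property, the passage to disjoint interiors, and the packing count — are routine applications of dyadic geometry.
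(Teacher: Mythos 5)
The paper states this proposition as a known result and provides no proof: it is cited verbatim from Appendix J of Grafakos, so there is no internal proof to compare against. Your approach, the classical dyadic Whitney construction, is exactly the one in that reference, so the plan is sound in outline. The problem is a concrete calibration error that makes your intermediate family fail to cover $\om$.

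You define $\cubes^0$ by $2\sqrt{n}\,l(\cube)\leq \dist(\cube,\partial\om)\leq 4\sqrt{n}\,l(\cube)$ and claim that for $x\in\om$ with $d=\dist(x,\partial\om)$, the dyadic cube through $x$ of side $2^k$ lies in $\cubes^0$ whenever $2^k\in[d/(5\sqrt{n}),\,d/(2\sqrt{n})]$. Neither endpoint is right. Writing $\cube_k$ for that cube, one only has $\dist(\cube_k,\partial\om)\leq d$ and $\dist(\cube_k,\partial\om)\geq d-\sqrt{n}\,2^k$; plugging these into your two-sided condition forces $2^k\in[d/(4\sqrt{n}),\,d/(3\sqrt{n})]$, an interval of ratio $4/3<2$, which need not contain a power of $2$. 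The failure is genuine: with $n=1$ and $\om=(-\infty,5.5)$, the condition $(j+3)2^k\leq 5.5\leq(j+5)2^k$ selects cubes covering $[1,3]$, $[-4,0]$, $[3,4.5]$, and so on, but no cube of $\cubes^0$ covers the open interval $(0,1)$, so the construction stops at the very first step. The fix is to use the lower threshold $\sqrt{n}$ rather than $2\sqrt{n}$, i.e. $\cubes^0=\{\cube:\sqrt{n}\,l(\cube)\leq\dist(\cube,\partial\om)\leq4\sqrt{n}\,l(\cube)\}$. Then the sufficient condition becomes $2^k\in[d/(4\sqrt{n}),\,d/(2\sqrt{n})]$, an interval of ratio exactly $2$, so a power of $2$ always lands inside. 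With that correction the rest of your argument goes through unchanged: your chain of inequalities for property (3) already starts from $\sqrt{n}\,l(\cube)\leq\dist(\cube,\partial\om)$ (the weaker bound, which the corrected $\cubes^0$ still satisfies) and yields $l(\cube)\leq 5\,l(\cube')$, hence $l(\cube)/l(\cube')\leq 4$ because the ratio is a power of $2$; and the packing count for property (4) is standard once the size ratio is bounded.
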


	A Whitney decomposition of $\om$ estimates which quantity attains the minimum in the definition of $\delta$.
	\begin{lemma}
		\label{whitney-delta}
		Let $\cubes$ be a Whitney decomposition of $\om\subset \R^n$, $\cube,\cube'\in \cubes$ and $x\in \cube$ and $y\in \cube'$.
		Then
		\begin{equation}
			\label{length-dist-boundary}
			\sqrt{n} l(\cube) \leq \dist(x,\partial\om) \leq 5\sqrt{n} l(\cube).
		\end{equation}
		If $\cube,\cube'$ are neighbours then
		\begin{equation}
			\label{nbour}
			\delta(x,y) = \|x-y\|.
		\end{equation}
		If $\cube,\cube'$ are not neighbours then
	\begin{equation}
		\label{not-nbour}
		\frac{l(\cube)+l(\cube')}{8} \leq \delta(x,y) \leq 5\sqrt{n}(l(\cube)+l(\cube')).
	\end{equation}
	\end{lemma}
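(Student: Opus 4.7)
The plan is to prove the four claims in order, leveraging Whitney property (2) from \Cref{whitney} throughout. For \eqref{length-dist-boundary}, the lower bound is immediate since $x\in\cube$ gives $\dist(x,\partial\om)\geq\dist(\cube,\partial\om)\geq\sqrt{n}\,l(\cube)$; the upper bound follows by choosing $p\in\cube$ that approximately realises $\dist(\cube,\partial\om)\leq 4\sqrt{n}\,l(\cube)$ and estimating $\dist(x,\partial\om)\leq\|x-p\|+\dist(p,\partial\om)\leq\sqrt{n}\,l(\cube)+4\sqrt{n}\,l(\cube)$.

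To prove \eqref{nbour} I would pick any $z\in\cube\cap\cube'$ and bound the Euclidean distance by $\|x-y\|\leq\|x-z\|+\|z-y\|\leq\sqrt{n}(l(\cube)+l(\cube'))$; comparing with the lower bound from \eqref{length-dist-boundary} shows $\|x-y\|\leq\dist(x,\partial\om)+\dist(y,\partial\om)$, so the minimum defining $\delta$ is attained by $\|x-y\|$. The upper bound in \eqref{not-nbour} is then immediate from $\delta(x,y)\leq\dist(x,\partial\om)+\dist(y,\partial\om)\leq 5\sqrt{n}(l(\cube)+l(\cube'))$.

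The lower bound in \eqref{not-nbour} is the main obstacle. Since $\delta$ is a minimum of two quantities, it suffices to bound each from below by $(l(\cube)+l(\cube'))/8$. The term $\dist(x,\partial\om)+\dist(y,\partial\om)\geq\sqrt{n}(l(\cube)+l(\cube'))$ is immediate. To control $\|x-y\|$, I would split on the ratio of side lengths, taking without loss of generality $l(\cube)\leq l(\cube')$. If $l(\cube')>7l(\cube)$, the reverse triangle inequality for $\dist(\,\cdot\,,\partial\om)$ together with \eqref{length-dist-boundary} gives $\|x-y\|\geq\sqrt{n}\bigl(l(\cube')-5l(\cube)\bigr)$, which a short algebraic computation shows to exceed $(l(\cube)+l(\cube'))/8$ for every $n\geq 1$. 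If $l(\cube')\leq 7l(\cube)$, I would appeal to the structural fact that disjoint cubes in the Whitney decomposition satisfy $\dist(\cube,\cube')\geq\min(l(\cube),l(\cube'))$; this holds because dyadic cubes that do not share a boundary point occupy distinct cells of the coarser lattice of side length $\min(l(\cube),l(\cube'))$. Combining with $l(\cube')\leq 7l(\cube)$ yields $\|x-y\|\geq l(\cube)\geq(l(\cube)+l(\cube'))/8$. The constant $1/8$ is calibrated precisely so that the two cases overlap, and the hardest input is this structural distance bound for disjoint Whitney cubes of comparable size.
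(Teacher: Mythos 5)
Your proof of \eqref{length-dist-boundary}, of \eqref{nbour}, and of the upper bound in \eqref{not-nbour} coincides with the paper's. For the lower bound in \eqref{not-nbour} you take a genuinely different route. The paper (under the normalisation $l(\cube)\geq l(\cube')$) asserts in a single line that $\|x-y\|\geq l(\cube'')$ for $\cube''$ a neighbour of $\cube$ and then invokes Whitney property (3) to get $\|x-y\|\geq l(\cube)/4\geq(l(\cube)+l(\cube'))/8$. You instead split on the size ratio: when $l(\cube')>7l(\cube)$ you use the reverse triangle inequality for $\dist(\cdot,\partial\om)$ together with \eqref{length-dist-boundary}, and when $l(\cube')\leq 7l(\cube)$ you use the separation property of disjoint dyadic cubes, $\dist(\cube,\cube')\geq\min\{l(\cube),l(\cube')\}$. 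The threshold $7$ is calibrated so the two cases overlap, and all the arithmetic checks out. Your route is more explicit, and arguably supplies something the paper's one-liner leaves implicit: the dyadic separation fact alone gives only $\|x-y\|\geq\min\{l(\cube),l(\cube')\}$, which falls short of $(l(\cube)+l(\cube'))/8$ once the smaller cube is at most an eighth of the larger, so some additional ingredient (your reverse-triangle-inequality case) is genuinely needed. One caveat: your separation fact uses that the cubes in \Cref{whitney} are dyadic. This is true of the Grafakos construction cited, but is not recorded among the four listed properties of the decomposition, so it is an extra structural input that deserves to be stated explicitly rather than treated as a consequence of \Cref{whitney} as stated.
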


	\begin{proof}
		The first inequality in \eqref{length-dist-boundary} is implied by $\sqrt{n}l(\cube)\leq \dist(Q,\partial\om)$.
		The second follows from the triangle inequality:
		\begin{equation*}
			\dist(x,\partial\om) \leq \dist(Q,\partial\om) + \operatorname{diam}(Q) \leq 4\sqrt{n}l(\cube) + \sqrt{n}l(\cube).
		\end{equation*}
	Now suppose $\cube,\cube'$ are neighbours and let $z\in Q\cap Q'$.
	Then by \eqref{length-dist-boundary},
	\begin{align*}
		\dist(x,\partial \om) + \dist(y,\partial \om) &\geq \sqrt{n}(l(\cube)+l(\cube'))\\
		&\geq \|x-z\| + \|z-y\| \geq \|x-y\|,
	\end{align*}
	giving \eqref{nbour}.
	On the other hand, suppose that $\cube,\cube'$ are not neighbours and $l(\cube)\geq l(\cube')$.
	Then $\|x-y\|\geq l(\cube'')$ for $\cube''$ a neighbour of $\cube$.
	In particular
	\[\|x-y\|\geq l(\cube'')\geq \frac{l(\cube)}{4} \geq \frac{l(\cube)+l(\cube')}{8},\]
	giving the first inequality in \eqref{not-nbour}.
	The second inequality follows from \eqref{length-dist-boundary}.
\end{proof}

	For the remainder of the paper we fix $\alm\in\N$, $\om\subset \R^n$ non-empty, open and proper and $\cubes$ a Whitney decomposition of $\om$ as in \Cref{whitney}.
	We also fix $\lsum$ as in \Cref{lsum-def}.
	
	\subsection{Constructing a coordinate system}
	To construct a bi-Lipschitz embedding of $\A(\scut)$ into $\lsum$,
	we define projections 
	\[
		\hLi_\cube\colon\A(\scut)\to\A(\R^{n+1})
	\] that serve as a coordinate system for $\A(\scut)$.
	The embedding into $\lsum$ will then be defined as the $\ell_2$-sum of the $\hLi_\cube$ (see \Cref{sum-of-wass}).

	We begin with the construction of a function $\L_\cube$ that approximates the identity within a given $\cube\in\cubes$, is supported on the neighbours of $\cube$, and maintains bi-Lipschitz bounds with $\delta$.
	For $\cube\in\cubes$  and $r>0$, we write $B(\cube,r)$ for the \emph{closed} $r$-neighbourhood of $\cube$.  

	\begin{lemma}
		\label{deltaembedding}
		For each $\cube\in\cubes$ there exists a map
		\[\L_\cube \colon \om \to \R^{n+1}\]
		such that
		\begin{enumerate}
			\item \label{local-lipschitz} $\L_\cube$ is $9\sqrt{n+1}$-Lipschitz;
			\item \label{local-zero} $\L_\cube(x)=0$ for all $x\not\in B(\cube,l(\cube)/4)$.
			In particular, $\L_\cube$ is supported on the neighbours of $\cube$;
			\item \label{local-norm} $\|\L_\cube\|_\infty\leq \sqrt{n+1}l(\cube)$;
			\item \label{local-id} For all $x,y\in B(\cube,l(\cube)/8)$,
			\[\|\L_\cube(x)-\L_\cube(y)\|=\|x-y\|;\]
			\item \label{delta-lip} The extension of $\L_\cube$ to $\scut$, defined by $\L_\cube(\partial)=0$, is $9\sqrt{n+1}$-Lipschitz with respect to $\delta$;
			\item \label{local-biLip} If $x\in B(\cube,l(\cube)/8)$ and $y\in \scut$, then
			\begin{equation*}
				\|\L_\cube(x)-\L_\cube(y)\| \geq \min\left\{\frac{\|x-y\|}{2\sqrt{n}}, l(Q)\right\}.
			\end{equation*}
		\end{enumerate}
	\end{lemma}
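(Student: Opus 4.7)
The plan is to define $\L_\cube$ by an explicit formula built from a single Lipschitz cutoff, and then verify each of the six properties directly. Let $c_\cube$ denote the centre of $\cube$ and fix $\eta_\cube\colon \R^n \to [0,1]$, a standard $(8/l(\cube))$-Lipschitz cutoff, for instance $\eta_\cube(x) = \max(0, \min(1, 2 - 8\dist(x, \cube)/l(\cube)))$, which equals $1$ on $B(\cube, l(\cube)/8)$ and vanishes off $B(\cube, l(\cube)/4)$. I will then set
\[\L_\cube(x) = \eta_\cube(x)\bigl(x - c_\cube,\, l(\cube)\bigr) \in \R^{n+1},\]
where the extra coordinate $l(\cube)$ plays the crucial role of keeping $\L_\cube$ bounded away from $0$ on $B(\cube, l(\cube)/8)$ even when $x$ is close to $c_\cube$.

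Properties \eqref{local-zero}--\eqref{local-id} will then be immediate consequences of the formula. For \eqref{local-zero}, the support of $\eta_\cube$ lies in $B(\cube, l(\cube)/4)$, and inequality \eqref{not-nbour} in \Cref{whitney-delta} shows that non-neighbouring cubes $\cube'$ satisfy $\dist(\cube, \cube') \geq l(\cube)/4$, so the support lies in the union of the neighbours of $\cube$. For \eqref{local-norm}, observing that $\|x - c_\cube\| \leq \sqrt{n}\, l(\cube)$ on the support of $\eta_\cube$, one gets $\|\L_\cube(x)\|^2 \leq \|x - c_\cube\|^2 + l(\cube)^2 \leq (n+1)l(\cube)^2$. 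Property \eqref{local-id} is immediate since $\eta_\cube \equiv 1$ on $B(\cube, l(\cube)/8)$, so $\L_\cube(x) - \L_\cube(y) = (x-y, 0)$ there. For \eqref{local-lipschitz}, a direct calculation of the Jacobian gives the operator norm bound $\|D\L_\cube\| \leq \eta_\cube + \|\nabla \eta_\cube\| \cdot \sqrt{\|x - c_\cube\|^2 + l(\cube)^2} \leq 1 + (8/l(\cube)) \cdot \sqrt{n+1}\, l(\cube) \leq 9\sqrt{n+1}$.

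For \eqref{delta-lip}, I first verify the bound $\|\L_\cube(x)\| \leq 9\sqrt{n+1}\dist(x, \partial\om)$: if $x \notin B(\cube, l(\cube)/4)$ this is trivial, and otherwise the Whitney estimate \eqref{length-dist-boundary} yields $\dist(x, \partial\om) \geq \sqrt{n}\,l(\cube) - l(\cube)/4 \geq 3l(\cube)/4$, so \eqref{local-norm} suffices. Since $\L_\cube(\partial) = 0$, the triangle inequality then gives $\|\L_\cube(x) - \L_\cube(y)\| \leq 9\sqrt{n+1}(\dist(x, \partial\om) + \dist(y, \partial \om))$, and combining this with \eqref{local-lipschitz} yields $\|\L_\cube(x) - \L_\cube(y)\| \leq 9\sqrt{n+1}\, \delta(x, y)$.

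The main obstacle is \eqref{local-biLip}; here the extra coordinate $l(\cube)$ must be exploited carefully. Fix $x \in B(\cube, l(\cube)/8)$, so that $\L_\cube(x) = (x - c_\cube, l(\cube))$. If $y = \partial$ or $y \notin B(\cube, l(\cube)/4)$, then $\L_\cube(y) = 0$ and $\|\L_\cube(x)\| \geq l(\cube)$ from the last coordinate alone. Otherwise, writing $t = 1 - \eta_\cube(y) \in [0,1]$,
\[\L_\cube(x) - \L_\cube(y) = \bigl((x-y) + t(y - c_\cube),\ t\, l(\cube)\bigr).\]
Expanding the squared norm, minimising over $t$ by completing the square, and applying Cauchy--Schwarz will produce
\[\|\L_\cube(x) - \L_\cube(y)\|^2 \geq \frac{\|x - y\|^2\, l(\cube)^2}{\|y - c_\cube\|^2 + l(\cube)^2} \geq \frac{\|x - y\|^2}{n+1},\]
since $\|y - c_\cube\|^2 \leq n\, l(\cube)^2$ on the support of $\eta_\cube$. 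As $1/\sqrt{n+1} \geq 1/(2\sqrt{n})$ for $n \geq 1$, the required lower bound follows.
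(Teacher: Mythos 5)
Your construction $\L_\cube = \eta_\cube\,\varphi$ with $\varphi(x)=(x-c_\cube, l(\cube))$ and a cutoff that is $1$ on $B(\cube,l(\cube)/8)$, $0$ off $B(\cube,l(\cube)/4)$, and $(8/l(\cube))$-Lipschitz is exactly the paper's map, and your verifications of \cref{local-lipschitz}--\cref{delta-lip} follow the same lines (the paper phrases the Lipschitz bound via the product rule for Lipschitz functions rather than a Jacobian estimate, and for \cref{delta-lip} it works relative to the neighbour cube $\cube'$ containing $x$ rather than directly as you do, but these are cosmetic differences and both yield the stated constants).

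The one place you genuinely diverge is \cref{local-biLip}. The paper handles the case $y\in B(\cube,l(\cube)/4)$ by a dichotomy: setting $t=1-\eta_\cube(y)$, it compares $\sqrt{n}\,t\,l(\cube)$ against $\|x-y\|/2$ and then reads off the bound either from the first $n$ coordinates (via $\|(x-c_\cube)-\eta_\cube(y)(y-c_\cube)\|\geq\|x-y\|-t\|y-c_\cube\|$) or from the last coordinate alone. You instead compute $\L_\cube(x)-\L_\cube(y)=\bigl((x-y)+t(y-c_\cube),\,t\,l(\cube)\bigr)$ exactly and then minimise the quadratic in $t$, using Cauchy--Schwarz to arrive at $\|\L_\cube(x)-\L_\cube(y)\|^2\geq \|x-y\|^2 l(\cube)^2/(\|y-c_\cube\|^2+l(\cube)^2)\geq\|x-y\|^2/(n+1)$. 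This is a cleaner, case-free argument and in fact yields the slightly sharper constant $1/\sqrt{n+1}$ in place of $1/(2\sqrt n)$; since $1/\sqrt{n+1}\geq 1/(2\sqrt n)$ for $n\geq 1$, your bound implies the paper's. Both proofs are correct; yours is a modest improvement on this particular step. (One small slip: for the ``in particular'' clause of \cref{local-zero} you cite \eqref{not-nbour}, which bounds $\delta$; what is actually needed and true is the Euclidean bound $\dist(\cube,\cube')\geq l(\cube)/4$ for non-neighbours, which comes from the proof of \Cref{whitney-delta} rather than its statement. The paper simply calls this property ``immediate,'' so this does not affect the substance.)
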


	\begin{proof}
		Fix $\cube\in\cubes$ and let $c$ be the centre of $\cube$.
		For each $x\in\om$, let
		\[\eta(x)= \max\left\{1-\dist \left(x,B\left(\cube, \frac{l(\cube)}{8}\right)\right) \frac{8}{l(\cube)},0\right\}.\]
		That is, $\eta$ is an $8/l(\cube)$-Lipschitz function with $\|\eta\|_\infty=1$ that equals 1 on $B(\cube,l(\cube)/8)$ and 0 on $\om\setminus B(\cube,l(\cube)/4)$.
		We also set
		\[\varphi(x)=(x-c,l(\cube)) \in \R^{n+1},\]
		a 1-Lipschitz function satisfying $\|\varphi(x)\|\leq \sqrt{n+1}l(\cube)$ for all $x$ in the support of $\eta$.
		
		Define $\L_\cube = \eta \varphi$.
		Since $\L_\cube$ is a product of Lipschitz functions,
		the Lipschitz constant of $\L_\cube$ is bounded above by
		\[\Lip \varphi \|\eta\|_\infty + \sup\{\|\varphi(x)\|: x\in \operatorname{spt} \eta\} \Lip\eta \leq 1 + \sqrt{n+1}l(\cube) \frac{8}{l(\cube)} \leq 9\sqrt{n+1}.\]
		This demonstrates \cref{local-lipschitz}.
		\Cref{local-zero,local-norm,local-id} are immediate.

		To see \cref{delta-lip}, first let $x\in\scut$ be such that $\L_\cube (x)\neq 0$.
		Then by \cref{local-zero}, $x\in \cube'$ for $\cube'$ a neighbour of $\cube$, so that $l(\cube')\geq l(\cube)/4$.
		Therefore, by \cref{local-norm},
		\begin{align*}
			\|\L_{\cube}(x)\| &\leq 
			\sqrt{n+1}\ l(\cube)\nonumber\\
			&\leq 4\sqrt{n+1}\ l(\cube')\\
			&\leq 8\dist(x,\partial\om),
		\end{align*}
		using \cref{length-dist-boundary} for the final inequality.
		Thus
		\begin{equation*}
			\|\L_\cube(x)\| \leq 8\dist(x,\partial\om)
		\end{equation*}
		holds for any $x\in \scut$ (including $x=\partial$).
		Therefore, by the triangle inequality, for any $x,y\in \scut$,
		\begin{align*}
			\|\L_{\cube}(x)-\L_{\cube}(y)\| \leq 8\left(\dist(x,\partial\om) +\dist(y,\partial\om)\right).
		\end{align*}
		Combining this inequality with \cref{local-lipschitz} shows that $\L_{\cube}$ is $9\sqrt{n+1}$-Lipschitz with respect to $\delta$ on $\scut$.

		Finally, to see \cref{local-biLip}, first suppose that $y\not\in B(\cube,l(\cube)/4)$.
		Then by \cref{local-zero},
		\[\|\L_{\cube}(x)-\L_{\cube}(y)\| = \|\varphi(x)\| \geq l(\cube),\]
		so that \cref{local-biLip} holds in this case.
		
		In the case $y\in B(\cube,l(\cube)/4)$ we will show that
		\begin{equation}
			\|\L_{\cube}(x)-\L_{\cube}(y)\| \geq \frac{\|x-y\|}{2\sqrt{n}}, \label{component-alternate}
		\end{equation}
		completing the proof of \cref{local-biLip}.
		To this end, note that
		\[\|y-c\| \leq \sqrt{n}\frac{l(\cube)}{2} + \frac{l(\cube)}{4} \leq \sqrt{n}l(\cube).\]
		Therefore, by considering the first component of $\L_\cube$, we see that
		\begin{align*}
			\|\L_\cube(x)-\L_\cube(y)\| &\geq 
			\|(x-c) -\eta(y)(y-c)\|\\
			&\geq \|x-y\| -(1-\eta(y))\|y-c\|\\
			&\geq \|x-y\| -\sqrt{n}(1-\eta(y))l(\cube).
		\end{align*}
		Thus, if
		\begin{equation}
			\label{bilip-alt}
			\sqrt{n}(1-\eta(y))l(\cube) \leq \frac{\|x-y\|}{2},
		\end{equation}
		then \eqref{component-alternate} holds.
		On the other hand, if \eqref{bilip-alt} does not hold, then by considering the final component of $\L$, we have
		\[\|\L_\cube(x)-\L_\cube(y)\| \geq (1-\eta(y))l(\cube) \geq \frac{\|x-y\|}{2\sqrt{n}},\]
		giving \eqref{component-alternate}.
	\end{proof}
	
	The pushforwards under each $\phi_\cube$ define our coordinate projections on $\A(\om)$.
	\begin{definition}
		\label{vectembed}
		For every $\cube\in \cubes$, define $\hLi_\cube$ to be the pushforward under $\L_\cube$.
		That is,
		\begin{align*}
			\hLi_\cube \colon \A(\scut) &\to \A(\R^{n+1})\\
			\sum_{i=1}^\alm \ldir p_i \rdir &\mapsto \sum_{i=1}^\alm \ldir \L_\cube(p_i) \rdir.
		\end{align*}
	\end{definition}

	Recall the construction of $\lsum$ from \Cref{lsum-def}.
	\begin{definition}\label{sum-of-wass}
		Define the embedding $\hLi$ by
		\begin{align*}
			\A(\scut) &\to \lsum\\
			\hLi &= \sum_{\cube\in\cubes} \hLi_\cube
		\end{align*}
	\end{definition}

	This is well defined since each $\L_\cube$ is supported on the neighbours of $\cube$, so that each $x\in \om$ is contained in the support of at most $12^n$ of the $\L_\cube$.

	\subsection{\texorpdfstring{$\hLi$ is bi-Lipschitz}{The first embedding is bi-Lipschitz}}
	In this section we show that $\hLi$ is a bi-Lipschitz embedding, beginning by showing that it is Lipschitz.
	
	For $p\in(\scut)^\alm$ and $S\subset \scut$, let
	\[p^{-1}(S)=\{1\leq k\leq \alm : p_k\in S\}.\]
	From now on we use the notation $\sigma q$ to denote the element of $(\R^n)^\alm$ arising from the natural action of the symmetric group $\Sigma_\alm$ on $(\R^n)^\alm$: $(\sigma q)_i = q_{\sigma(i)}$ for each $1\leq i \leq \alm$.

	\begin{lemma}
		\label{vectorbetterLip}
		For any $p,q\in \A(\scut)$,
		\[\sum_{\cube\in\cubes}\Wass_2(\hLi_\cube(p),\hLi_\cube(q))^2 \leq c_0 \Wass_2^2(p,q),\]
		where $c_0\geq 1$ depends only upon $n$.
	\end{lemma}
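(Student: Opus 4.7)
The plan is to transfer an optimal permutation for $\Wass_2(p,q)$ in $\A(\scut)$ into a (generally suboptimal) coupling for every pushforward pair $\hLi_\cube(p), \hLi_\cube(q)$ in $\Wass_2(\R^{n+1})$, and then reduce the resulting double sum to a pointwise estimate in $\scut$ by exploiting the fact that each $\L_\cube$ has small support.

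First, I would write $p = \sum_i \ldir p_i \rdir$ and $q = \sum_i \ldir q_i \rdir$, and fix $\sigma \in \Sigma_\alm$ minimising $\sum_i \delta(p_i, q_{\sigma(i)})^2$, so that by \Cref{tuples},
\[
\Wass_2^2(p,q) = \sum_{i=1}^\alm \delta(p_i, q_{\sigma(i)})^2.
\]
Since $\hLi_\cube$ is a pushforward by $\L_\cube$, this same $\sigma$ matches the atoms of $\hLi_\cube(p)$ with those of $\hLi_\cube(q)$ and hence furnishes a coupling in $\Wass_2(\R^{n+1})$ with
\[
\Wass_2^2(\hLi_\cube(p), \hLi_\cube(q)) \leq \sum_{i=1}^\alm \|\L_\cube(p_i) - \L_\cube(q_{\sigma(i)})\|^2.
\]
Summing over $\cube \in \cubes$ and interchanging the order of summation, the claim reduces to establishing a pointwise estimate
\[
\sum_{\cube \in \cubes} \|\L_\cube(x) - \L_\cube(y)\|^2 \leq c_0\, \delta(x,y)^2
\]
for every $x, y \in \scut$, with $c_0$ depending only on $n$.

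For this pointwise inequality I would combine two features of the family $\{\L_\cube\}$ from \Cref{deltaembedding}. By \cref{delta-lip}, every summand on the left is bounded above by $81(n+1)\delta(x,y)^2$. By \cref{local-zero} together with item (4) of \Cref{whitney}, for each fixed $z \in \scut$ the set of cubes $\cube$ with $\L_\cube(z) \neq 0$ has cardinality bounded by a constant $N(n)$ depending only on $n$ (and is empty if $z = \partial$); consequently at most $2N(n)$ cubes contribute a nonzero term. Multiplying these two bounds then gives the pointwise estimate, and hence the lemma, with $c_0 = 162(n+1)N(n)$.

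The only real bookkeeping challenge is the uniform bound on $N(n)$. The condition $\L_\cube(z) \neq 0$ forces $z$ to lie in some neighbour of $\cube$, equivalently $\cube$ is a neighbour of some cube containing $z$; since any point of $\om$ lies in boundedly many cubes of the Whitney decomposition and each cube has at most $12^n$ neighbours by item (4) of \Cref{whitney}, this yields the required dimensional estimate. Once this is settled, everything else assembles routinely from the properties of $\L_\cube$ already proved.
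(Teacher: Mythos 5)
Your proposal is correct and follows essentially the same approach as the paper: fix a single permutation $\sigma$ realising $\Wass_2(p,q)$, use it as a (suboptimal) matching for every $\hLi_\cube(p),\hLi_\cube(q)$, bound each term via \Cref{deltaembedding}~\cref{delta-lip}, and control the number of contributing cubes via \cref{local-zero} together with the bounded overlap of the Whitney decomposition. The paper organises the same computation by introducing the index set $J^\sigma_\cube$ and exchanging $\min_\sigma$ with $\sum_\cube$ rather than fixing the optimal $\sigma$ up front and packaging the overlap estimate as a pointwise inequality $\sum_\cube\|\L_\cube(x)-\L_\cube(y)\|^2\leq c_0\,\delta(x,y)^2$, but these are the same argument and yield the same constant up to the value of $N(n)$.
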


	\begin{proof}
		Fix $p,q\in (\scut)^\alm$ and let $\cube\in\cubes$ and $\sigma\in\Sigma_m$.
		Set
		\[J_{\cube}^{\sigma}=p^{-1}(B(\cube, l(\cube)/4))\cup (\sigma q)^{-1}(B(\cube, l(\cube)/4)),\]
		so that, by \Cref{deltaembedding} \cref{local-zero},
		\begin{equation*}
			\sum_{k=1}^m\|\L_{\cube}(p_k)-\L_{\cube}(q_{\sigma(k)})\|^2 = \sum_{k\in J_\cube^\sigma}\|\L_{\cube}(p_k)-\L_{\cube}(q_{\sigma(k)})\|^2.
		\end{equation*}
		Applying \Cref{deltaembedding} \cref{delta-lip} gives
		\begin{equation*}
			\sum_{k=1}^m\|\L_{\cube}(p_k)-\L_{\cube}(q_{\sigma(k)})\|^2 \leq 9^2(n+1) \sum_{k\in J_\cube^\sigma} \delta(p_k,q_{\sigma(k)})^2.
		\end{equation*}
		Therefore
		\begin{equation*}
			\sum_{\cube\in\cubes}\min_{\sigma\in\Sigma_\alm}\sum_{k=1}^m\|\L_{\cube}(p_k)-\L_{\cube}(q_{\sigma(k)})\|^2 \leq 9^2(n+1)\sum_{\cube\in\cubes} \min_{\sigma\in\Sigma_\alm}\sum_{k\in J_\cube^\sigma} \delta(p_k,q_{\sigma(k)})^2.
		\end{equation*}
		Further,
		\begin{align*}
			\sum_{\cube\in \cubes}\min_{\sigma\in\Sigma_\alm}\sum_{k\in J_\cube^\sigma} \delta(p_k,q_{\sigma(k)})^2 &\leq \min_{\sigma\in\Sigma_\alm}\sum_{\cube\in\cubes}\sum_{k\in J_\cube^\sigma} \delta(p_k,q_{\sigma(k)})^2\\
			&\leq \min_{\sigma\in\Sigma_\alm} 2\cdot 12^n \sum_{k=1}^\alm \delta(p_k,q_{\sigma(k)})^2,
		\end{align*}
		since $B(\cube,l(\cube)/4)$ is contained within the union of the neighbours of $\cube$.
		The result follows for $c_0= 2\cdot 9^2 \cdot 12^n (n+1)$.
	\end{proof}

	To prove the lower Lipschitz bound, we fix the following notation until the end of the section.

	\begin{notation}
		\label{notation}
		Fix $p,q\in (\scut)^\alm$ and, for every $\cube\in\cubes$, let $\sigma_\cube\in\Sigma_\alm$ be such that 
		\begin{equation}
			\label{min-sigma}
			\sum_{k=1}^m\|\L_{\cube}(p_k)-\L_{\cube}(q_{\sigma_Q(k)})\|^2 = \Wass_2(\hLi_\cube(p),\hLi_\cube(q))^2.
		\end{equation}
		
		Let $\cube \in \cubes$.
		For integer $0\leq r \leq 2m$, the annuli
		\[\cube^r=B\left(\cube,\frac{r+1}{3\alm} \frac{l(\cube)}{8}\right)\setminus B\left(\cube,\frac{r}{3\alm} \frac{l(\cube)}{8}\right)\]
		are disjoint and so there exists $0\leq r \leq 2\alm$ such that
		\begin{equation}
			\label{empty-annulus}
			p^{-1}(\cube^{r})\cup (\sigma_\cube q)^{-1}(\cube^{r})=\emptyset.
		\end{equation}
		Set
		\begin{equation*}
			\widehat \cube= B\left(\cube,\frac{r}{3\alm} \frac{l(\cube)}{8}\right).
		\end{equation*}
		Note that $\widehat\cube$ is contained within the union of the neighbours of $\cube$.
		
		Let $c_1=(48\sqrt{n})^{-1}$ and define $\cubes'$ to be the set of $\cube\in\cubes$ for which
		\begin{equation}\label{close-match}
			\Wass_2(\hLi_\cube(p),\hLi_\cube(q)) < c_1 \frac{l(\cube)}{\alm}.
		\end{equation}
		Set
		\begin{equation*}
			E = \bigcup_{\cube\in\cubes'} \widehat\cube.
		\end{equation*}
		
	\end{notation}

	To obtain a lower bound of
	\begin{equation}\label{sum-cube-cost}
		\sum_{\cube\in\cubes}\Wass_2(\hLi_\cube(p),\hLi_\cube(q))^2
	\end{equation}
	in terms of $\Wass_2^2(p,q)$, we will construct a $\tau\in\Sigma_m$ for which $\sum_{i=1}^m\delta(p_i,q_{\tau(i)})^2$ is comparable to \eqref{sum-cube-cost}.
	A first attempt to do this may be, for each $\cube\in\cubes$ and each $i\in p^{-1}(\cube)$, to define $\tau(i)=\sigma_\cube(i)$.
	Of course, a $\tau$ defined in this way need not be injective, for example if there exist $\cube\neq\cube'\in \cubes$ and $i\neq j$ such that $q_{\sigma_\cube(i)} = q_{\sigma_{\cube'}(j)}$.
	Nonetheless, we will show that it is possible to construct a permutation for the cubes in $\cubes'$.
	Indeed, we now show that conditions \eqref{empty-annulus} and \eqref{close-match} ensure that, for each $\cube\in\cubes'$, $p_i\in\widehat{\cube}$ if and only if $q_{\sigma_\cube(i)}\in \widehat \cube$:
	\eqref{empty-annulus} provides a moat surrounding $\widehat \cube$ and \eqref{close-match} ensures that the distance between $p_i$ and $q_{\sigma_\cube(i)}$ is less than the width of the moat.

	\begin{lemma}
		\label{annbij}
		For any $\cube\in\cubes'$,
		\begin{equation}\label{eq-bij}
			p^{-1}(\widehat\cube) = (\sigma_\cube q)^{-1}(\widehat \cube)
		\end{equation}
		and
		\begin{equation}
			\label{inner-isom}\|p_k-q_{\sigma_\cube(k)}\| =\|\L_\cube(p_k)-\L_\cube(q_{\sigma_\cube(k)})\| \quad \forall k\in p^{-1}(\widehat\cube).
		\end{equation}
		Moreover, if $R\in\cubes'$ with $l(R)\leq l(\cube)$,
		\begin{equation} \label{equal-index}
			p^{-1}(\widehat\cube \cap \widehat R)= (\sigma_R q)^{-1}(\widehat\cube \cap \widehat R)
		\end{equation}
	\end{lemma}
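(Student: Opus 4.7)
The plan is to first establish the key quantitative estimate underpinning all three conclusions: for any $\cube \in \cubes'$ and any index $k$ for which one of $p_k$ or $q_{\sigma_\cube(k)}$ lies in $\widehat\cube$, the bound
\begin{equation*}
	\|p_k - q_{\sigma_\cube(k)}\| < \frac{l(\cube)}{24\alm}
\end{equation*}
holds. Since $r \leq 2\alm$, one has $\widehat\cube \subset B(\cube, l(\cube)/12) \subset B(\cube, l(\cube)/8)$, which lets us apply \Cref{deltaembedding}\cref{local-biLip} with the endpoint lying in $\widehat\cube$ playing the role of $x$ (swapping roles of $p_k$ and $q_{\sigma_\cube(k)}$ if necessary). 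Combining the optimality of $\sigma_\cube$ in \eqref{min-sigma} with the defining inequality \eqref{close-match} of $\cubes'$ gives
\begin{equation*}
	\|\L_\cube(p_k) - \L_\cube(q_{\sigma_\cube(k)})\| \leq \Wass_2(\hLi_\cube(p), \hLi_\cube(q)) < \frac{c_1 l(\cube)}{\alm} < l(\cube),
\end{equation*}
so the minimum in \cref{local-biLip} must be attained by the first term $\|p_k - q_{\sigma_\cube(k)}\|/(2\sqrt{n})$; rearranging using $c_1 = (48\sqrt{n})^{-1}$ yields the asserted estimate (and incidentally forces $q_{\sigma_\cube(k)} \in \om$).

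The identity \eqref{eq-bij} will then follow from a moat argument. Assuming $p_k \in \widehat\cube$, the annulus $\cube^r$ immediately surrounding $\widehat\cube$ has width $l(\cube)/(24\alm)$, so the previous estimate and the triangle inequality give $\dist(q_{\sigma_\cube(k)}, \cube) < (r+1)l(\cube)/(24\alm)$. Since $\sigma_\cube$ is a permutation, condition \eqref{empty-annulus} is equivalent to $q^{-1}(\cube^r) = \emptyset$, so in particular $q_{\sigma_\cube(k)} \notin \cube^r$; the only way to reconcile these two facts is $q_{\sigma_\cube(k)} \in \widehat\cube$. Symmetry handles the reverse inclusion, and \eqref{inner-isom} is then immediate from \Cref{deltaembedding}\cref{local-id}, since both points lie in $\widehat\cube \subset B(\cube, l(\cube)/8)$.

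For \eqref{equal-index}, I would take $p_k \in \widehat\cube \cap \widehat R$ and apply the distance estimate above with $R$ in place of $\cube$ (using $p_k \in \widehat R$ and $R \in \cubes'$); this yields $\|p_k - q_{\sigma_R(k)}\| < l(R)/(24\alm) \leq l(\cube)/(24\alm)$, while \eqref{eq-bij} applied to $R$ gives $q_{\sigma_R(k)} \in \widehat R$. The crucial observation is that \eqref{empty-annulus} is permutation-invariant: the moat $\cube^{r_\cube}$ excludes every $q_j$, and hence $q_{\sigma_R(k)}$ in particular. Repeating the moat argument of the previous paragraph, now for the matching $\sigma_R$, places $q_{\sigma_R(k)} \in \widehat\cube$; combined with $q_{\sigma_R(k)} \in \widehat R$ this gives the desired inclusion, and the reverse follows by symmetry. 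I expect the main subtlety to lie exactly in this permutation-invariance of \eqref{empty-annulus}, which is what allows a moat engineered at $\cube$ for $\sigma_\cube$ to simultaneously constrain any other matching $\sigma_R$.
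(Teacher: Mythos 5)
Your proposal is correct and follows essentially the same route as the paper: establish the quantitative estimate $\|p_k-q_{\sigma_\cube(k)}\|<l(\cube)/(24\alm)$ via \eqref{close-match}, \eqref{min-sigma} and \Cref{deltaembedding}\cref{local-biLip}, then use the empty annulus \eqref{empty-annulus} as a moat to conclude. The one thing you make explicit that the paper leaves implicit is the permutation-invariance of \eqref{empty-annulus} (that $(\sigma_\cube q)^{-1}(\cube^r)=\emptyset$ is equivalent to $q^{-1}(\cube^r)=\emptyset$, hence excludes $q_{\sigma_R(k)}$ as well as $q_{\sigma_\cube(k)}$), which is indeed exactly what makes the moat argument transfer to the other matching $\sigma_R$ in \eqref{equal-index}; this is a useful clarification, not a different approach.
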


	\begin{proof}
		For any $k\in p^{-1}(\widehat\cube)$, \eqref{close-match} and \Cref{deltaembedding} \cref{local-biLip} imply
		\[
		\min\left\{\frac{\|p_k-q_{\sigma_\cube(k)}\|}{2\sqrt{n}},l(\cube)\right\} < c_1\frac{l(\cube)}{\alm}.	
		\]
		In particular,
		\begin{equation}
			\label{cant-jump-moat}
			\|p_k-q_{\sigma_\cube(k)}\| < \frac{l(\cube)}{24\alm}.
		\end{equation}
		Therefore \eqref{empty-annulus} implies that $q_{\sigma_\cube(k)}\in \widehat\cube$.
		By symmetry, if $k\in (\sigma_\cube q)^{-1}(\widehat \cube)$ then $k\in p^{-1}(\widehat \cube)$ and so \eqref{eq-bij} holds.
		Since $\widehat\cube\subset B(\cube,l(\cube)/8)$, \Cref{deltaembedding} \cref{local-id} implies \eqref{inner-isom}.

		Now let $R\in\cubes'$ with $l(R)\leq l(\cube)$ and $k\in p^{-1}(\widehat\cube \cap \widehat R)$.
		Then \eqref{cant-jump-moat} for $R$ implies
		\begin{equation*}
			\|p_k-q_{\sigma_R(k)}\| < \frac{l(R)}{24m} \leq \frac{l(\cube)}{24m}
		\end{equation*}
		and so \eqref{empty-annulus} implies $q_{\sigma_R(k)}\in \widehat \cube$.
		The similar argument with $p$ and $\sigma_R q$ exchanged gives \eqref{equal-index}.
	\end{proof}

	By carefully partitioning $E$ using the $\widehat Q$, we use \Cref{annbij} to construct the desired permutation on $p^{-1}(E)$.
	\begin{proposition}
		\label{lowerbound-euclidean}
		There exists a bijection $\tau\colon p^{-1}(E)\to q^{-1}(E)$ such that
		\begin{equation*}
			\sum_{k\in p^{-1}(E)}\|p_k-q_{\tau(k)}\|^2 \leq \sum_{\cube\in\cubes'}\Wass_2(\hLi_\cube(p),\hLi_\cube(q))^2.
		\end{equation*}
	\end{proposition}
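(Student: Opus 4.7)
The plan is to order the cubes in $\cubes'$ by decreasing side length, say $Q_1,Q_2,\ldots$ with $l(Q_1)\geq l(Q_2)\geq\ldots$ (ties broken arbitrarily), and to partition $E$ into the disjoint pieces
\[A_i = \widehat{Q_i}\setminus \bigcup_{j<i}\widehat{Q_j}.\]
I would then define $\tau$ piecewise by setting $\tau(k)=\sigma_{Q_i}(k)$ whenever $k\in p^{-1}(A_i)$, using the optimal permutations furnished by Notation \ref{notation}. Since each $\widehat{Q_i}\subset B(Q_i,l(Q_i)/8)$, the isometry \eqref{inner-isom} will translate the total cost with respect to $\|\cdot\|$ on the $p_k$'s into one with respect to $\|\cdot\|$ on the $\L_{Q_i}(p_k)$'s, which is bounded by $\Wass_2(\hLi_{Q_i}(p),\hLi_{Q_i}(q))^2$ through \eqref{min-sigma}. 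Summing over $i$ and noting that the $A_i$ are disjoint then immediately yields the cost estimate.

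The main obstacle, and the real content of the proposition, is verifying that $\tau$ is a well-defined bijection $p^{-1}(E)\to q^{-1}(E)$. For well-definedness I would argue: given $k\in p^{-1}(A_i)$, equation \eqref{eq-bij} places $q_{\sigma_{Q_i}(k)}$ in $\widehat{Q_i}$, and I must exclude the possibility that it also lies in some $\widehat{Q_j}$ with $j<i$. Suppose it did; then $l(Q_j)\geq l(Q_i)$ so \eqref{equal-index} applies with $\cube=Q_j$, $R=Q_i$, and the identity
\[p^{-1}(\widehat{Q_j}\cap \widehat{Q_i}) = (\sigma_{Q_i}q)^{-1}(\widehat{Q_j}\cap\widehat{Q_i})\]
would force $p_k\in\widehat{Q_j}$, contradicting $p_k\in A_i$.

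For the bijection property I would argue that for each fixed $i$, the restriction $\tau|_{p^{-1}(A_i)}$ is already a bijection onto $q^{-1}(A_i)$: injectivity is immediate from injectivity of $\sigma_{Q_i}$, and surjectivity follows by running the previous paragraph's argument with the roles of $p$ and $q$ reversed, since both Lemma \ref{annbij} identities are symmetric in the two tuples. Because the $A_i$ partition $E$, assembling these piecewise bijections produces the desired global bijection $\tau\colon p^{-1}(E)\to q^{-1}(E)$. The cost calculation is then immediate: for $k\in p^{-1}(A_i)$ the inclusion $p_k\in\widehat{Q_i}\subset B(Q_i,l(Q_i)/8)$ lets me invoke \eqref{inner-isom} to replace $\|p_k-q_{\tau(k)}\|$ by $\|\L_{Q_i}(p_k)-\L_{Q_i}(q_{\sigma_{Q_i}(k)})\|$, after which extending the sum over $A_i$ to all of $\{1,\ldots,\alm\}$ introduces only nonnegative terms, and \eqref{min-sigma} finishes the bound.
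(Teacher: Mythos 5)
Your proposal follows essentially the same route as the paper: order the cubes by decreasing side length, partition via $A_i=\widehat{Q_i}\setminus\bigcup_{j<i}\widehat{Q_j}$, define $\tau$ piecewise from the optimal permutations $\sigma_{Q_i}$, verify well-definedness and bijectivity through \eqref{equal-index}, and conclude via \eqref{inner-isom} and \eqref{min-sigma}. The one point you need to patch is the very first step: you enumerate \emph{all} of $\cubes'$ in decreasing order of side length, but such an enumeration need not exist. Indeed $\cubes'$ may be infinite with unbounded side lengths, because any cube $Q$ whose neighbourhood misses the supports of $p$ and $q$ has $\hLi_Q(p)=\hLi_Q(q)=0$, so $\Wass_2(\hLi_Q(p),\hLi_Q(q))=0<c_1 l(Q)/\alm$ places $Q\in\cubes'$ automatically. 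The paper resolves this by first passing to the \emph{finite} subfamily $\cubes''=\{\cube\in\cubes':p^{-1}(\widehat\cube)\neq\emptyset\}$, which by \eqref{eq-bij} equals $\{\cube\in\cubes':q^{-1}(\widehat\cube)\neq\emptyset\}$; finiteness holds since each $p_k$ lies in $\widehat\cube$ for only boundedly many $\cube$, and the excluded cubes contribute nothing to $p^{-1}(E)$ or $q^{-1}(E)$. With that finiteness reduction in place, your argument is correct and coincides with the paper's.
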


	\begin{proof}
	Let
	\[\cubes''=\{\cube\in\cubes' : p^{-1}(\widehat\cube) \neq\emptyset\}.\]
	Note that, by \eqref{eq-bij}, $\cubes''$ can equivalently be defined as the set of $\cube\in\cubes$ with $q^{-1}(\widehat\cube) \neq\emptyset$.
	Since $\cubes''$ is finite, we enumerate it as
		\begin{equation*}
			\cubes'' = \{\cube_1,\cube_2,\ldots,\cube_j\}
		\end{equation*}
		in such a way that
		\begin{equation*}
			l(\cube_1) \geq l(\cube_2) \geq \ldots \geq l(\cube_j).
		\end{equation*}
		Then, for $1\leq i\leq k\leq j$, applying \Cref{annbij} with $\cube=\cube_k$ and $R=\cube_i$ gives
		\begin{equation}
			\label{claim-step}
			p^{-1}\left(\widehat\cube_i \cap \widehat\cube_k\right)= (\sigma_{\cube_k} q)^{-1}\left(\widehat\cube_i \cap \widehat\cube_k\right) \quad \forall 1\leq i \leq k \leq j.
		\end{equation}

		Let $B_1 = \widehat \cube_1$ and for each $2\leq k\leq j$ define
		\[B_k:= \widehat \cube_k \setminus \bigcup_{i=1}^{k-1} \widehat \cube_i = \widehat \cube_k \setminus \bigcup_{i=1}^{k-1} \widehat \cube_i \cap \widehat \cube_k.\]
		Then \eqref{claim-step} implies that $\sigma_{\cube_k}$ is a permutation between $p^{-1}(B_k)$ and $\sigma_{\cube_k}q^{-1}(B_k)$ for each $1\leq k\leq j$.
		Therefore, we define a bijection 
		\begin{equation*}
			\tau \colon p^{-1}(E) \to q^{-1}(E)
		\end{equation*}
		by setting $\tau$ to equal $\sigma_{\cube_k}$ on $D_k:=p^{-1}(B_k)$ for each $1\leq k\leq j$.
		Then
		\begin{align*}
			\sum_{k\in p^{-1}(E)} \|p_k-q_{\tau(k)}\|^2 &= \sum_{i=1}^j\sum_{k\in D_i} \|p_k-q_{\tau(k)}\|^2 \\
			&= \sum_{i=1}^j\sum_{k\in D_i} \|p_k-q_{\sigma_{\cube_i}(k)}\|^2 \\
			&= \sum_{i=1}^j\sum_{k\in D_i} \|\L_{\cube_i}(p_k)-\L_{
				\cube_i}(q_{\sigma_{\cube_i}(k)})\|^2 \\
			&\leq \sum_{\cube\in\cubes''}\sum_{k=1}^m \|\L_{\cube}(p_k)-\L_{
				\cube}(q_{\sigma_{\cube}(k)})\|^2, 
		\end{align*}
		using \eqref{inner-isom} for the third equality.
		Finally \eqref{min-sigma} completes the proof.
	\end{proof}

	Next we consider the points outside $E$ for which we use the distance to $\partial \om$ to estimate $\delta$.
	\begin{lemma}
		\label{lowerbound-boundary}
		For any bijection
		\begin{equation*}
			\sigma\colon p^{-1}(\om\setminus E) \to q^{-1}(\om\setminus E)
		\end{equation*}
		we have
		\begin{equation*}
			\sum_{k\in p^{-1}(\om\setminus E)} (\dist(p_k,\partial\om) + \dist(q_{\sigma(k)},\partial\om))^2 \leq m^3 c_2 \sum_{\cube\in\cubes\setminus \cubes'} \Wass_2(\hLi_\cube(p),\hLi_\cube(q))^2,
		\end{equation*}
		for $c_2\geq 1$ that depends only upon $n$.
	\end{lemma}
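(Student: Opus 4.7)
The plan is to reduce everything to a pointwise estimate: for each $k\in p^{-1}(\om\setminus E)$, the Whitney cube containing $p_k$ must lie outside of $\cubes'$, and therefore its side length is controlled by $\Wass_2(\hLi_\cube(p),\hLi_\cube(q))$ via the failure of \eqref{close-match}. Then \eqref{length-dist-boundary} converts this into a bound on $\dist(p_k,\partial\om)$. The same argument applied to $q_{\sigma(k)}$ handles the other summand.

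Concretely, for each $k\in p^{-1}(\om\setminus E)$ I pick any Whitney cube $\cube_k\in\cubes$ containing $p_k$. Since $\cube\subset\widehat\cube$ by construction, if $\cube_k$ belonged to $\cubes'$ we would have $p_k\in\widehat\cube_k\subset E$, contradicting our choice of $k$. Therefore $\cube_k\in\cubes\setminus\cubes'$, so \eqref{close-match} fails and
\[
 l(\cube_k)\leq \frac{m}{c_1}\Wass_2(\hLi_{\cube_k}(p),\hLi_{\cube_k}(q))=48\sqrt{n}\,m\,\Wass_2(\hLi_{\cube_k}(p),\hLi_{\cube_k}(q)).
\]
Combining with \eqref{length-dist-boundary} yields $\dist(p_k,\partial\om)\leq 240\,n\,m\,\Wass_2(\hLi_{\cube_k}(p),\hLi_{\cube_k}(q))$. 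Applying the same reasoning to $q_{\sigma(k)}\in\om\setminus E$ with a Whitney cube $R_k\ni q_{\sigma(k)}$ gives the analogous estimate in terms of $\Wass_2(\hLi_{R_k}(p),\hLi_{R_k}(q))$.

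I would then use $(a+b)^2\leq 2(a^2+b^2)$ and sum over $k\in p^{-1}(\om\setminus E)$. The key bookkeeping step is that each cube $\cube\in\cubes\setminus\cubes'$ can be equal to $\cube_k$ (resp.\ $R_k$) for at most $m$ indices $k$, since $|p^{-1}(\om\setminus E)|\leq m$ and a cube only contributes through the $k$'s with $p_k\in\cube$ (resp.\ $q_{\sigma(k)}\in\cube$). This multiplicity is where the extra factor of $m$ enters, producing the overall $m^3$: one factor of $m^2$ from squaring the $240nm$ constant and one from the multiplicity. The constant $c_2=4\cdot 240^2\cdot n^2$ then works.

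There is no real obstacle here beyond verifying that $\cube_k\in\cubes\setminus\cubes'$, which is forced by the inclusion $\cube_k\subset\widehat{\cube_k}$ and the fact that $E$ is the union of the $\widehat\cube$ over $\cubes'$. Everything else is routine estimation using \Cref{whitney-delta} and the definition of $\cubes'$, and the bijection $\sigma$ plays only a formal role (no structure beyond injectivity is required).
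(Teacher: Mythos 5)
Your proof is correct and follows essentially the same route as the paper's: identify the Whitney cube $\cube_k$ containing $p_k$, observe that $\cube_k\subset\widehat\cube_k$ forces $\cube_k\notin\cubes'$, use the failure of \eqref{close-match} together with \eqref{length-dist-boundary} to bound $\dist(p_k,\partial\om)$ by $\Wass_2(\hLi_{\cube_k}(p),\hLi_{\cube_k}(q))$, do the same for $q_{\sigma(k)}$, and sum using $(a+b)^2\le 2(a^2+b^2)$ together with the multiplicity-$m$ counting over cubes. The arithmetic (including $c_2 = 4\cdot 240^2 n^2 = 100\cdot 48^2 n^2$) matches the paper's constant.
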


	\begin{proof}
		For a moment fix $k\in p^{-1}(\om\setminus E)$ and let $\cube\in\cubes$ contain $p_k$.
		Then necessarily $Q\not\in \cubes'$.
		Therefore \eqref{close-match} and \eqref{length-dist-boundary} imply
		\begin{equation*}
		\Wass_2(\hLi_\cube(p),\hLi_\cube(q)) \geq \frac{c_1}{m}l(\cube) \geq \frac{c_1}{5\sqrt{n}m} \dist(p_k,\partial \om).
		\end{equation*}
		Since each $\cube\in\cubes$ contains at most $m$ such points $p_k$,
		\begin{align*}
			\sum_{k\in p^{-1}(\om\setminus E)} \dist(p_k,\partial\om)^2 &\leq \frac{25 m^2 n}{c_1^2} m \sum_{\cube\in\cubes\setminus \cubes'} \Wass_2(\hLi_\cube(p),\hLi_\cube(q))^2.
		\end{align*}
		The same estimate for $\sigma q$ gives the desired inequality for $c_2=4\cdot 25 n/c_1^2$.
	\end{proof}

	We combine our previous results to show that $\hLi_\cube$ is a bi-Lipschitz embedding.
	\begin{theorem}\label{embed-into-wass}
	    For any $p,q\in\A(\scut)$,
	    \begin{equation*}
		    \frac{\Wass_2(p,q)^2}{c_3 \alm^3} \leq \sum_{\cube\in\cubes} \Wass_2(\hLi_\cube(p),\hLi_\cube(q))^2 \leq c_3\Wass_2(p,q)^2,
	    \end{equation*}
		where $c_3\geq 1$ depends only upon $n$.
	\end{theorem}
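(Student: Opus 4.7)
The upper bound is immediate from \Cref{vectorbetterLip} with constant $c_0$, so the substance lies in the lower bound, namely $\Wass_2(p,q)^2 \leq c_3 \alm^3 \sum_{\cube \in \cubes} \Wass_2^2(\hLi_\cube(p),\hLi_\cube(q))$. The plan is to fix representatives $p,q \in (\scut)^\alm$ and exhibit a single permutation $\tau \in \Sigma_\alm$ for which
\[
\sum_{k=1}^\alm \delta(p_k, q_{\tau(k)})^2 \leq c_3\alm^3 \sum_{\cube\in\cubes} \Wass_2^2(\hLi_\cube(p),\hLi_\cube(q));
\]
since $\Wass_2^2(p,q)$ is the minimum over all such permutations, this suffices. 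The construction of $\tau$ will be guided by the decomposition of indices induced by the set $E$ from \Cref{notation}: inside $E$ the map $\delta$ is locally isometric to $\|\cdot\|$, so the Euclidean bookkeeping from \Cref{lowerbound-euclidean} applies; outside $E$ the distance $\delta$ is controlled by $\dist(\cdot,\partial\om)$, which is what \Cref{lowerbound-boundary} estimates.

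Concretely, I would first apply \Cref{lowerbound-euclidean} to obtain a bijection $\tau_1 : p^{-1}(E) \to q^{-1}(E)$ satisfying the Euclidean bound in that proposition. Since the two complementary sets $\{1,\dots,\alm\} \setminus p^{-1}(E)$ and $\{1,\dots,\alm\} \setminus q^{-1}(E)$ have equal cardinality, $\tau_1$ extends to some $\tau \in \Sigma_\alm$ by choosing any bijection on the complement. Using $\delta \leq \|\cdot\|$ on $\om \times \om$, the contribution of $p^{-1}(E)$ is bounded directly by \Cref{lowerbound-euclidean}:
\[
\sum_{k\in p^{-1}(E)} \delta(p_k, q_{\tau(k)})^2 \leq \sum_{\cube\in\cubes'} \Wass_2^2(\hLi_\cube(p),\hLi_\cube(q)).
\]
For indices $k \notin p^{-1}(E)$, one has the universal inequality $\delta(x,y) \leq \dist(x,\partial\om) + \dist(y,\partial\om)$ for all $x,y \in \scut$ (with $\dist(\partial,\partial\om)=0$); applying $(a+b)^2 \leq 2(a^2+b^2)$ and re-indexing the image sum (using that $\tau$ is a bijection between complements) yields
\[
\sum_{k\notin p^{-1}(E)} \delta(p_k,q_{\tau(k)})^2 \leq 2\!\!\sum_{k\in p^{-1}(\om\setminus E)}\!\!\dist(p_k,\partial\om)^2 \,+\, 2\!\!\sum_{k\in q^{-1}(\om\setminus E)}\!\!\dist(q_k,\partial\om)^2.
\]
Each of these sums is bounded by $(c_2\alm^3/4) \sum_{\cube\in\cubes\setminus\cubes'} \Wass_2^2(\hLi_\cube(p),\hLi_\cube(q))$: for the $p$-sum this is exactly the inequality derived inside the proof of \Cref{lowerbound-boundary}, and the $q$-sum follows by the symmetric argument.

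The only delicate point, and the part that required the careful construction of $E$ through annular moats in \Cref{notation}, is the matching inside $E$ provided by \Cref{lowerbound-euclidean}; the matching on the complement is a non-issue because the convention $\dist(\partial,\partial\om)=0$ absorbs any discrepancy in the sizes of $p^{-1}(\om\setminus E)$ and $q^{-1}(\om\setminus E)$ arising from indices with $p_k = \partial$ or $q_k = \partial$. Summing the two estimates and choosing $c_3$ to majorise both $c_0$ (from the upper bound) and a constant multiple of $\max(1,c_2)$ (from the lower bound) completes the proof.
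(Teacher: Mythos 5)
Your proof is correct and follows essentially the same route as the paper: split the indices according to the set $E$, bound the $E$-contribution via \Cref{lowerbound-euclidean} and the complementary contribution via the estimate inside \Cref{lowerbound-boundary}, then extend the resulting matching to a full permutation and compare to $\Wass_2(p,q)^2$. Your treatment of indices $k$ with $p_k = \partial$ or $q_{\tau(k)} = \partial$ (absorbed by the convention $\dist(\partial,\partial\om)=0$ and the re-indexing of the $q$-sum) is in fact slightly more explicit than the paper's compressed display, which writes the second sum only over $p^{-1}(\om\setminus E)$.
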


	\begin{proof}
		The right hand inequality is given by \Cref{vectorbetterLip}.

		For the left hand inequality, let $\tau$ be the bijection obtained from \Cref{lowerbound-euclidean} and arbitrarily extend it to a bijection of $\{1,\ldots,\alm\}$.
		Then
		\begin{align*}
			\sum_{\cube\in\cubes} \Wass_2(\hLi_\cube(p),\hLi_\cube(q))^2 &= \sum_{\cube\in\cubes'}\sum_{k=1}^m \|\L_{\cube}(p_k)-\L_{
				\cube}(q_{\sigma_{\cube}(k)})\|^2 \\
			&\quad + \sum_{\cube\not\in\cubes'}\sum_{k=1}^m \|\L_{\cube}(p_k)-\L_{
				\cube}(q_{\sigma_{\cube}(k)})\|^2 \\
			&\geq \sum_{k\in p^{-1}(E)} \|p_k-q_{\tau(k)}\|^2\\
			&\quad + \frac{1}{c_2 \alm^3}\sum_{k\in p^{-1}(\om\setminus E)} (\dist(p_k,\partial\om) + \dist(q_{\tau(k)},\partial\om))^2\\
			&\geq \frac{1}{c_2 \alm^3} \sum_{k=1}^{\alm}\delta(p_k,q_{\tau(k)})^2\\
			&\geq \frac{1}{c_2 \alm^3} \Wass_2(p,q)^2,
		\end{align*}
	using \Cref{lowerbound-euclidean} and \Cref{lowerbound-boundary} for the first inequality.
	\end{proof}

	\section{The embedding into Hilbert space}\label{sec4}

	In this section we conclude the proof of \Cref{main-intro}.
	Let $\xi \colon \A(\R^{n+1}) \to \R^N$ be the embedding given by \Cref{almgren}.
	We write
	\[\ell_2 = \sum_{\cube\in \cubes} \R^N\]
		as a direct $l_2$-sum over $\cubes$.
	Recall the construction of $\lsum$ from \Cref{lsum-def}.
	\begin{lemma}\label{embedding}
		The function $\xi'\colon \lsum\to\ell_2$ defined by
		\begin{align*}
			\sum_{\cube\in\cubes} \A (\R^{n+1})&\to \sum_{\cube\in\cubes} \R^N \\
			\xi' = \sum_{\cube\in\cubes} \xi 
		\end{align*}
		is well defined.
		Moreover, for any $a,b\in \lsum$,
		\begin{equation*}
			\frac{1}{c \alm^{2n+2}} \sum_{\cube\in\cubes} \Wass_2(a_\cube,b_\cube)^2 \leq \|\xi'(a)-\xi'(b)\|^2 \leq \sum_{\cube\in\cubes} \Wass_2(a_\cube,b_\cube)^2,
		\end{equation*}
		for $c\geq 1$ depending only upon $n$.
		
	\end{lemma}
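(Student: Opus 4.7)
The plan is to verify well-definedness and then read off both Lipschitz bounds term-by-term from Almgren's embedding $\xi$.

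First I would show $\xi'$ is well-defined, i.e.\ that the sequence $(\xi(a_Q))_{Q\in\cubes}$ lies in $\sum_{Q\in\cubes}\R^N$. Since $\xi(0)=0$ by \Cref{almgren} and $\xi$ is $1$-Lipschitz, we have $\|\xi(a_Q)\|=\|\xi(a_Q)-\xi(0)\|\leq \Wass_2(a_Q,0)$ for each $Q$. Squaring and summing gives
\begin{equation*}
\sum_{Q\in\cubes}\|\xi(a_Q)\|^2\leq \sum_{Q\in\cubes}\Wass_2(a_Q,0)^2<\infty,
\end{equation*}
by the definition of $\lsum$ in \Cref{lsum-def}. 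Hence $\xi'(a)\in\ell_2$.

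Next I would establish the upper bound. By \Cref{almgren}, $\|\xi(a_Q)-\xi(b_Q)\|\leq \Wass_2(a_Q,b_Q)$ for each $Q\in\cubes$. Squaring and summing over $Q\in\cubes$ yields
\begin{equation*}
\|\xi'(a)-\xi'(b)\|^2=\sum_{Q\in\cubes}\|\xi(a_Q)-\xi(b_Q)\|^2\leq \sum_{Q\in\cubes}\Wass_2(a_Q,b_Q)^2,
\end{equation*}
which is the right-hand inequality.

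For the lower bound, \Cref{almgren} gives $\Wass_2(a_Q,b_Q)\leq c\alm^{n+1}\|\xi(a_Q)-\xi(b_Q)\|$ for each $Q\in\cubes$, with a constant $c\geq 1$ depending only on $n$. Squaring, summing, and dividing by $c^2\alm^{2n+2}$ gives
\begin{equation*}
\frac{1}{c^2\alm^{2n+2}}\sum_{Q\in\cubes}\Wass_2(a_Q,b_Q)^2\leq \sum_{Q\in\cubes}\|\xi(a_Q)-\xi(b_Q)\|^2=\|\xi'(a)-\xi'(b)\|^2,
\end{equation*}
which is the left-hand inequality (after relabelling $c^2$ as $c$). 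There is no real obstacle here: since $\lsum$ and $\ell_2$ are both defined as direct $\ell_2$-sums indexed by the same set $\cubes$, the two pointwise Almgren bounds combine trivially in parallel; the only subtle point is that $\xi(0)=0$ is needed to turn the $\ell_2$-summability hypothesis on $(a_Q)$ into $\ell_2$-summability of $(\xi(a_Q))$, which is exactly why \Cref{almgren} records this normalisation.
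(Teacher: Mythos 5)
Your proof is correct and follows essentially the same route as the paper: well-definedness via $\xi(0)=0$ together with the $1$-Lipschitz bound, the upper inequality from the $1$-Lipschitz bound applied termwise, and the lower inequality from the Almgren distortion bound applied termwise and summed. The only cosmetic difference is your explicit relabelling of $c^2$ as $c$, which the paper leaves implicit.
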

	\begin{proof}
	Let $a\in\lsum$, so that
	\[
	\sum_{\cube \in\cubes} \Wass_2(p_\cube,0)^2 <\infty.
	\]
	Since $\xi$ is 1-Lipschitz this implies that
	\[
		\sum_{\cube \in\cubes} \|\xi(p_\cube)\|^2 = \sum_{\cube \in\cubes} \|\xi(p_\cube)-\xi(0)\|^2 \leq \sum_{\cube \in\cubes} \Wass_2(p_\cube,0)^2 <\infty. 
	\]
	Hence, $\xi'$ is well defined.
	Moreover, using that $\xi$ is 1-Lipschitz again, we have, for any $b\in\lsum$,
	\[
		\sum_{\cube \in\cubes} \|\xi(a_\cube)-\xi(b_\cube)\|^2 \leq \sum_{\cube \in\cubes} \Wass_2(a_\cube,b_\cube)^2,
	\]
	so that $\xi'$ is also 1-Lipschitz.
	Finally, \Cref{almgren} gives
	\[
		\sum_{\cube \in\cubes} \|\xi(a_\cube)-\xi(b_\cube)\|^2 \geq \frac{1}{cm^{2n+2}}\sum_{\cube \in\cubes} \Wass_2(a_\cube,b_\cube)^2.
	\]
	\end{proof}

	\begin{theorem}
		\label{main}
		There exists a bi-Lipschitz embedding $\zeta \colon \B(\om)\to \ell_2$ with distortion at most $cm^{n+5/2}$, for $c\geq 1$ depending only upon $n$.
		That is, for any $p,q\in \B(\om)$,
		\begin{equation*}
			\frac{\Wass_2(p,q)}{c \alm^{n+5/2}} \leq \|\zeta(p)-\zeta(q)\| \leq c \Wass_2(p,q).
		\end{equation*}
	\end{theorem}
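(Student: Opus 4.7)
The plan is to take $\zeta$ to be the composition of the three embeddings built earlier in the paper: first the isometric embedding $\iota \colon \B(\om) \hookrightarrow \A(\scut)$ of \Cref{B-embed-A}, then the bi-Lipschitz map $\hLi \colon \A(\scut) \to \lsum$ of \Cref{embed-into-wass}, and finally the map $\xi' \colon \lsum \to \ell_2$ of \Cref{embedding}. Since the substantive analytic work has been completed in the earlier sections, what remains is to chain the three distortion estimates and verify that the exponents of $\alm$ combine to the claimed $\alm^{n+5/2}$.

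For the upper Lipschitz bound I would apply the right-hand inequality of \Cref{embedding}, which asserts $\|\xi'(a)-\xi'(b)\|^2 \leq \sum_{\cube} \Wass_2(a_\cube,b_\cube)^2$ with no $\alm$-dependence, followed by the right-hand inequality of \Cref{embed-into-wass}, which bounds this sum above by $c_3\,\Wass_2(\iota(p),\iota(q))^2$. Since $\iota$ is an isometry, this chains to $\|\zeta(p)-\zeta(q)\|\leq \sqrt{c_3}\,\Wass_2(p,q)$, an $O(1)$ constant depending only on $n$.

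For the lower bound I would chain the two left-hand inequalities in the opposite direction: \Cref{embedding} contributes a factor $\sqrt{c}\,\alm^{n+1}$ and \Cref{embed-into-wass} contributes a further $\sqrt{c_3}\,\alm^{3/2}$, so that
\[
\|\zeta(p)-\zeta(q)\|^2 \;\geq\; \frac{1}{c\alm^{2n+2}}\sum_{\cube}\Wass_2(\hLi_\cube(\iota p),\hLi_\cube(\iota q))^2 \;\geq\; \frac{1}{c\,c_3\,\alm^{2n+5}}\Wass_2(p,q)^2.
\]
Taking square roots gives a lower-Lipschitz factor of order $\alm^{(n+1)+3/2}=\alm^{n+5/2}$, which combined with the $O(1)$ upper bound yields distortion at most $c\alm^{n+5/2}$.

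There is essentially no obstacle at this stage: the two non-trivial bi-Lipschitz estimates are already proven and this final theorem is an assembly step. The only minor points to check are that the composition is well defined, namely that $\iota(p)\in\A(\scut)$ for $p\in\B(\om)$ so that \Cref{embed-into-wass} applies, and that $\hLi(\iota(p))\in\lsum$ for each such $p$ so that \Cref{embedding} applies; both follow immediately from the construction of $\iota$ and the support and norm bounds on $\L_\cube$ provided by \Cref{deltaembedding}.
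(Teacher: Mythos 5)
Your proposal is correct and matches the paper's proof exactly: the paper also defines $\zeta$ as the composition of the isometry from \Cref{B-embed-A}, the bi-Lipschitz map from \Cref{embed-into-wass}, and the map from \Cref{embedding}, and the distortion exponent $\alm^{n+5/2}$ arises from chaining the two lower bounds $\alm^{n+1}\cdot\alm^{3/2}$ precisely as you compute. The paper simply states the composition without spelling out the arithmetic, which you have verified correctly.
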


	\begin{proof}
		First isometrically embed $\B(\om)$ into $\A(\scut)$ via \Cref{B-embed-A}.
		One then applies \Cref{embed-into-wass} to bi-Lipschitz embed $\A(\scut)$ into $\lsum$.
		Finally, \Cref{embedding} bi-Lipschitz embeds $\lsum$ into $\ell_2$, as required.
	\end{proof}

	\begin{remark}\label{distortion}
		For $n\geq 3$, the distortion of any embedding of $\A(\scut)$ into $\ell_2$ converges to $\infty$ as $\alm$ increases.
		In particular, $\bWass_2(\om)$ does not bi-Lipschitz embed into $\ell_2$.

		Indeed, by \Cref{nbour} we see that $\A(\scut)$ contains an isometric copy of $\A(\cube)$ for some cube $\cube$. 
		Thus, the distortion of any embedding into $\ell_2$ is at least that of $\A(\cube)$.
		For $n\geq 3$, Andoni, Naor and Nieman \cite[Theorem 7]{naor} prove that $\Wass_2(\R^n)$ does not coarsely, in particular bi-Lipschitz, embed into any Banach space of non-trivial type, namely Hilbert space.
		Since the set of discrete measures is dense in $\Wass_2(\R^n)$, a scaling argument shows that the distortion of any bi-Lipschitz embedding of $\A(\cube)$ must converge to $\infty$ as $\alm$ does.

		The same conclusion can be made for $n=2$ using an unpublished result of Austin and Naor announced in \cite[Remark 8]{naor}, which states that $\Wass_2(\R^2)$ does not bi-Lipschitz embed into $L_1$ and, hence, does not bi-Lipschitz embed into $\ell_2$.
	\end{remark}

	\section{Conflict of interests}
	On behalf of all authors, the corresponding author states that there are no conflicts of interest.


\end{document}